\documentclass[11pt,twoside]{article}
\usepackage{latexsym,amsmath,amssymb,amsthm,times,tikz,pgfplots}

\newif\ifdviwin

\renewenvironment{thebibliography}[1]{
  \begin{oldthebibliography}{#1}
    \setlength{\itemsep}{0em}
    \setlength{\parskip}{0em}
}
{
  \end{oldthebibliography}
}

\usepackage{titlesec}
	
\titleformat{\section}{\large\bfseries\center}{\thesection}{1em}{\vspace{.0cm}}
\titleformat{\subsection}[runin]{\bfseries}{\thesubsection}{1em}{}

\usepackage[english]{babel}
\usepackage{indentfirst}
\usepackage[mathscr]{eucal}
\usepackage{amssymb,amsmath,amsfonts}
\usepackage{fancybox,fancyhdr}
\usepackage{graphicx}
\usepackage[utf8]{inputenc}
\usepackage{float}
\usepackage{color}
\usepackage[pdftex]{hyperref}
\hypersetup{colorlinks=true,linkcolor=blue,citecolor=blue}

\newif\ifdviwin

\dviwintrue

\newtheorem{theorem}{Theorem}[section]
\newtheorem{corollary}[theorem]{Corollary}
\newtheorem{proposition}[theorem]{Proposition}

\theoremstyle{definition}
\newtheorem{definition}[theorem]{Definition}

\def\r{\mathbb R}
\def\R{\mathcal{R}}
\def\H{\mathbb H}

\def\s{\mathbb S}
\def\n{\mathbf n}

\def\h{\mathsf h}
\def\acot{\mathrm{arccot}}

\begin{document}
\thispagestyle{empty}

\begin{center}

\renewcommand{\thefootnote}{\,}
{\Large \bf Invariant $\lambda$-translators in $\mathbb{S}^2\times\mathbb{R}$
\footnote{\hspace{-.75cm}
\emph{Mathematics Subject Classification:} \\
\emph{Keywords}: }}\\
\vspace{0.5cm} { Antonio Bueno}\\
\end{center}
\vspace{.5cm}

\begin{abstract}
A $\lambda$-translator in $\mathbb{S}^2\times\mathbb{R}$ is an oriented surface whose mean curvature $H$ satisfies $H=\langle N,\partial_z\rangle+\lambda$, where $N$ is the unit normal, $\partial_z$ is the vertical Killing vector field tangent to the fibers of the submersion and $\lambda\in\r$. When $\lambda=0$ we fall into the class of translators. In this paper, we classify all $\lambda$-translators that are invariant by a one-parameter group of rotations and by vertical translations of $\s^2\times\r$.
\end{abstract}


\section{Introduction}

In recent years, the study of surfaces in the Euclidean 3-space $\r^3$ endowed with a smooth density has become an active and fruitful field of research. The setting is as follows: consider $e^\phi$ a smooth density, where $\phi\in C^\infty(\r^3)$, which serves as weight to measure the surface area and volume, $dA_\phi=e^\phi dA$ and $dV_\phi=e^\phi dV$. If we consider a compactly supported variation of $M$ with variation vector field $\xi$, then
$$
A'_\phi(0)=\int_MH_\phi\langle N,\xi\rangle dA_\phi,\quad V'_\phi(0)=-\int_M\langle N,\xi\rangle,
$$
where $H_\phi=H-\langle N,D\phi\rangle$ and $D$ is the gradient in $\r^3$. Thus, $M$ is a critical point of the weighted area for a compactly supported variation that preserves the weighted volume if and only if $H_\phi$ is a constant function, $H_\phi=\lambda$ \cite{bcmr}. If we drop the condition that the variations preserve the volume, then $M$ is a critical point of $A_\phi$ if and only if $H_\phi=0$. Some authors call the latter \emph{$\phi$-minimal surfaces}. Particular choices of $\phi$ that give rise to well-known $\phi$-minimal surfaces are the following:
\begin{enumerate}
\item Minimal surfaces for $\phi$ a constant function. As extensive as this theory is, we refer the reader to \cite{mp} to a complete survey on classical and recent advances on this theory. 
\item Self-shrinkers (resp. self-expanders) for $\phi(x)=-|x|^2/4$ (resp. for $\phi=|x|^2/4$) \cite{ilm,smo}.
\item Singular minimal surfaces for $\phi(x)=\log\langle x,\textbf{v}\rangle$, where $\textbf{v}\in\r^3$ \cite{die,lop3}.
\item Translating solitons of the mean curvature flow (MCF) for $\phi(x)=\langle x,\textbf{v}\rangle$. We will refer simply as \emph{translators}. Without aiming to collect all the bibliography, we refer the reader to \cite{css,himw,sx} and references therein for an outline of the development of this theory.
\end{enumerate}
Among the aforementioned class of surfaces arising for different densities, we make special emphasis to the one defined by the linear density $\phi(x)=\langle x,\textbf{v}\rangle$. Translators appear in the singularity theory of the MCF as the equation of the limit flow by a blow-up procedure near type II singularities; see \cite{hui,hs}, hence they are solutions of the MCF that evolve purely by translations along the direction $\textbf{v}$. We want to remark that equation $H_\phi=0$ appeared in the classical article of Serrin \cite{ser} and it was studied in the context of the maximum principle of elliptic equations. 

Besides the Euclidean 3-space, translators have been also generalized to homogeneous 3-dimensional manifolds of great interest: the hyperbolic 3-space \cite{bulo3,lrs}; the product  spaces $\mathbb{H}^2\times\r$ \cite{bue1,bue2,bulo1,lipi} and $\s^2\times\r$ \cite{lm1}; the Heisenberg and the solvable group \cite{pip1,pip2}; and the special lineal group $SL(2,\r)$ \cite{lm2}. At each of such spaces, a translator is defined by substituting in Eq. \eqref{eqlambdatranslator} $\textbf{v}$ by a Killing vector field $X$, hence its mean curvature is given by $H=\langle X,N\rangle$. We emphasize that also conformal Killing vector fields have been considered, \cite{bulo2,moshs}, with the difference that the shape of the corresponding translators is not preserved along the flow. Finally, translators have been also addressed in Lorentzian spaces \cite{laor}.

In contrast with the fruitful theory of translators developed in homogeneous 3-manifolds, a systematic study of the equation $H_\phi=\lambda$, with $\lambda\neq0$, has been only considered in $\r^3$ \cite{blo,buor1,lop1,lop2}, in $\H^2\times\r$ \cite{buor2} and in $\mathbb{L}^3$ \cite{buor3}. Our objective is to cover these gaps in the remaining 3-dimensional Thurston geometries and as starting point we take the product space $\s^2\times\r$. The product structure decomposes the space of Killing vector fields as the ones corresponding to the base $\s^2$ and the translations in the $\r$-factor. Hence the Killing vector fields of $\s^2\times\r$ are the three infinitesimal generators of rotations of $\s^2$, leaving the $\r$-factor invariant, and translations in the $\r$-direction. If we consider coordinates $(p,z)\in\s^2\times\r$, translations in the $\r$-factor are given by $T_t(p,z)=(p,z+t),\ t\in\r$. Furthermore, for fixed $(p,z)$, the curve $\alpha(t)=(p,z+t)$ is a geodesic which is the integral line of the Killing vector field $\partial_z=\alpha'(t)=(0,1)$. In this paper, we take $\partial_z$ as the Killing vector field that generalizes the translator equation and consider the following surfaces.

\begin{definition}
Given $\lambda\in\r$, an oriented surface $M$ in $\s^2\times\r$ is a $\lambda$-translator if its mean curvature $H$ satisfies
\begin{equation}\label{eqlambdatranslator}
H(p)=\langle N(p),\partial_z\rangle+\lambda,\qquad p\in M,
\end{equation}
where $N$ is the unit normal vector field of $M$.
\end{definition}
In this paper we always assume $\lambda\neq0$ in order to avoid the translator case studied in \cite{lm1}. The product structure makes readily available two canonical projections,
$$
\pi:\s^2\times\r\to\s^2,\qquad \h:\s^2\times\r\to\r.
$$
The former is a Riemannian submersion whose \emph{fibers}, defined as the preimages $\pi^{-1}(z)$, are precisely geodesics tangent to $\partial_z$. The latter is known as the \emph{height function} and satisfies $D\h=\partial_z$, where $D$ is now the gradient in $\s^2\times\r$. Consequently, $\lambda$-translators are also critical points for the weighted variational problem for the density $e^z$.

Some examples of $\lambda$-translators are the following:
\begin{enumerate}
\item Let $\gamma$ be a geodesic in $\s^2$. The \emph{vertical plane} over $\gamma$ is the surface $P_\gamma=\pi^{-1}(\gamma)=\gamma\times\r$. If $\n_\gamma$ a unit normal along $\gamma$ in $\s^2$, then $N_\gamma=(\n_\gamma,0)$ is a unit normal vector field on $P_\gamma$. Such a surface is totally geodesic and is a $\lambda$-translator for the orientations $\pm N_\gamma$ if and only if $\lambda=0$. In other words, $P_\gamma$ is a translator.
\item Let $\gamma$ be a circle in $\s^2$ with constant geodesic curvature $\kappa_\gamma$. The \emph{vertical cylinder} over $\gamma$ is the surface $C_\gamma=\pi^{-1}(\gamma)$, which has constant mean curvature $H=\kappa_\gamma/2$. As for vertical planes, vertical cylinders are everywhere tangent to $\partial_z$ hence their weighted mean curvature is $H_\phi=H$. Consequently, $C_\gamma$ is a $\lambda$-translator if and only if $\kappa_\gamma=2\lambda$.
\item Given $z_0\in\r$, the \emph{horizontal plane} at height $z_0$ is $\Pi_{z_0}=\h^{-1}(z_0)=\s^2\times\{z_0\}$. Such a surface is totally geodesic and is a $\lambda$-translator for the orientation $\epsilon\partial_z$ if and only if $\lambda=-\epsilon$, where $\epsilon\in\{\pm1\}$.
\end{enumerate}

We focus on classifying invariant $\lambda$-translators, as well as in obtaining some uniqueness and non-existence results. To be precise, fix $p_0\in\s^2$ and consider the geodesic $L_0=\{p_0\}\times\r$. In this paper, we classify $\lambda$-translators that are invariant under rotations about $L_0$ and under translations parallel to $L_0$. This follows the same scheme as most of the aforementioned works regarding translators and $\lambda$-translators in different homogeneous spaces, since invariant $\lambda$-translators are commonly used as barriers to prove non-existence and uniqueness results. Specifically, the two main classification results are the following.

\begin{theorem}\label{t1}
For each $\lambda\in\r$, there are exactly two rotational $\lambda$-translators intersecting orthogonally the rotation axis, denoted by $M_\pm$, whose unit normal at such intersection point is $\pm\partial_z$. Furthermore, $M_+$ is a properly embedded disk whose end converges to the vertical cylinder of radius $\acot(2\lambda)$, and 
\begin{enumerate}
\item If $\lambda>1$, then $M_-$ is a properly immersed disk that self-intersects as it loops a finite number of times until converging the vertical cylinder of radius $\pi-\acot(2\lambda)$.
\item If $\lambda=1$, then $M_-$ is a horizontal slice $\s^2\times\{z_0\}$, $z_0\in\r$.
\item If $\lambda<1$, then $M_-$ is a properly embedded disk whose end converges to the vertical cylinder of radius $\pi-\acot(2\lambda)$.
\end{enumerate}
\end{theorem}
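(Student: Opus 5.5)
We reduce the problem to a planar autonomous system and analyse its phase portrait. Let $r\in(0,\pi)$ be the spherical distance to $p_0$. A rotational surface about $L_0$ is generated by an arc-length parametrized profile curve $s\mapsto (r(s),z(s))$ in a vertical half-plane, with $r'=\cos\theta$ and $z'=\sin\theta$. We take the unit normal whose vertical component is $\langle N,\partial_z\rangle=\cos\theta$. The principal curvatures are $\theta'$ and $\sin\theta\cot r$, the latter because the parallels are circles of geodesic curvature $\cot r$ in $\s^2$. Hence \eqref{eqlambdatranslator} becomes
\begin{equation*}
r'=\cos\theta,\qquad \theta'=2\cos\theta+2\lambda-\sin\theta\cot r .
\end{equation*}
As a sanity check, $\theta\equiv\pi/2$ is a solution iff $\cot r=2\lambda$, i.e. $r=\acot(2\lambda)$. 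Likewise $\theta\equiv-\pi/2$ is a solution iff $r=\pi-\acot(2\lambda)$. The horizontal line $\theta\equiv\pi$ gives $\theta'=2\lambda-2$, so it is a solution iff $\lambda=1$. These recover the vertical cylinders and the slice of the introduction, and they are the two equilibria $e_\pm=(\pi/2\mp(\pi/2-\acot(2\lambda)),\pm\pi/2)$ of the system on the cylinder $(0,\pi)\times(\r/2\pi\mathbb{Z})$.

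\textbf{Existence and uniqueness at the axis.} A surface meeting $L_0$ orthogonally corresponds to a profile starting at $r=0$ with $\theta=0$ (normal $+\partial_z$) or $\theta=\pi$ (normal $-\partial_z$). The system is singular there. We would write $\theta'$ as an integral equation in $r$, using $\sin\theta\cot r\approx \sin\theta/r$, and apply a standard contraction/fixed-point argument, as for radial solutions of elliptic ODEs. This gives exactly one smooth solution for each initial angle, with $\theta'(0)=\cos\theta(0)+\lambda$. This yields $M_\pm$ and their uniqueness, and the case $\lambda=1$ of $M_-$ follows immediately from the uniqueness.

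\textbf{Global phase-plane analysis.} At $e_+$ the linearization is $\begin{pmatrix}0&-1\\ 1+4\lambda^2&-2\end{pmatrix}$, with trace $-2$ and determinant $1+4\lambda^2$. Its discriminant $4-4(1+4\lambda^2)<0$, so $e_+$ is a stable focus. The computation at $e_-$ is analogous (after reversing orientation). To exclude periodic orbits, we use Bendixson–Dulac with weight $\sin r$: the divergence of $\sin r\,(r',\theta')$ equals $-2\sin r\sin\theta$, which has a sign on each of the strips $\theta\in(0,\pi)$ and $\theta\in(-\pi,0)$. Next we study the nullclines $\theta=\pm\pi/2$ (where $r'=0$) and $\{2\cos\theta+2\lambda=\sin\theta\cot r\}$ (where $\theta'=0$). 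These split the strips into monotonicity regions, from which we show that the orbit of $M_+$ leaving $(0,0)$ cannot escape to $r=0$ or $r=\pi$ and enters the strip $\theta\in(0,\pi)$. By Poincaré–Bendixson and the absence of cycles, it converges to $e_+$. Since $\theta$ then stays in $(0,\pi)$ after a finite time, $z$ is monotone there, and on the initial arc $r$ is monotone; this gives an embedded graph-like profile spiralling onto the cylinder of radius $\acot(2\lambda)$, so $M_+$ is an embedded disk.

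\textbf{The surface $M_-$ and the main obstacle.} We start the orbit at $(0,\pi)$, where $\theta'(0)=\lambda-1$. If $\lambda<1$, then $\theta$ decreases from $\pi$ into the lower strip, and the same monotonicity argument shows $\theta\in(-\pi,0)$ for all $s>0$. The profile is then embedded and converges to $e_-$, i.e. to the cylinder of radius $\pi-\acot(2\lambda)$. If $\lambda>1$, then $\theta$ first increases past $\pi$, and the profile turns back. The orbit may wind around the phase cylinder, crossing $\theta\equiv\pi \pmod{2\pi}$ several times; each winding is a loop of the profile, which produces self-intersections. The main difficulty is proving that only finitely many windings occur before the orbit is captured in a neighbourhood of $e_-$. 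Our approach is to show that each full turn forces a definite increase of $r$, bounded below in terms of $\lambda$, since $\theta'\ge 2\lambda-2-|\cot r|$ away from the axis. Because $r<\pi$, this limits the number of turns. Once the orbit is trapped in the basin of the focus $e_-$, convergence follows as before. Properness in all cases follows because $z'=\sin\theta$ stays bounded away from $0$ near the limit cylinder, so $z\to\pm\infty$.
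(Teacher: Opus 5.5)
\textbf{The finiteness of loops.} For $\lambda>1$, the step you yourself flag as the main obstacle, finitely many loops of $M_-$, is not proved, and the mechanism you sketch does not work as written.

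First, your time direction is reversed. At $(0,\pi)$ you have $r'=\cos\pi=-1$, so the profile of $M_-$ \emph{arrives} at the axis and its orbit is defined for $s<0$. Since $\theta'(0)=\lambda-1$, near the axis $\theta<\pi$ when $\lambda>1$, and $\theta>\pi$ (the lower strip) when $\lambda<1$. This is the opposite of what you write. For $\lambda>1$ the winding happens as $s\to-\infty$, with $\theta$ decreasing.

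Second, $\theta'\ge2\lambda-2-|\cot r|$ only bounds the angular speed. The net change of $r$ over a turn, $\int\cos\theta\,d\theta/\theta'$, gets its sign from the asymmetry produced by the term $2\cos\theta$. That sign is \emph{negative} in forward time: $r$ grows from turn to turn only backwards. Proving this is exactly what Prop.~\ref{propendpointsorbita} does, by comparing the orbit over $(-\pi,-\pi/2)$ with its reflection across $\theta=-\pi/2$. It makes the abscissae of successive crossings with $\theta\equiv-\pi$ strictly increasing. The paper then obtains finiteness by a limiting argument, not from a uniform lower bound on the increments, which you do not have.

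Third, your Dulac function (your divergence $-2\sin r\sin\theta$ is the correct one) only excludes cycles inside a strip. It does not exclude a rotating periodic orbit on the phase cylinder, on which $\gamma_-$ could accumulate with infinitely many loops. A convenient tool is $E=\sin r\sin\theta+2\lambda\cos r$, which satisfies $E'=2\sin r\cos^2\theta\ge0$.
\begin{itemize}
\item It excludes all periodic orbits, contractible or not.
\item Since $E=2\lambda\cos r<2\lambda=E(0,0)$ on $\theta\equiv0,\pi$, it shows that $\gamma_+$ never leaves $0<\theta<\pi$. You need this for the embeddedness of $M_+$ when $\lambda>1$, but you only assert it.
\end{itemize}

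\textbf{The missing alternative.} More seriously, at each turn there is a third alternative that your argument ignores. The paper's dichotomy (``$\sigma_n$ converges to $e_1$ or reaches $\theta=-\pi$'') ignores it too. The orbit may end at the antipodal axis point $(\pi,-\pi)$ mod $2\pi$. Then $M_-$ closes up into an immersed sphere meeting both the axis and the antipodal fiber $\{(0,0,-1)\}\times\r$. Thm.~\ref{thnonexistenceclosed2} does not forbid this, because such a sphere is not null-homologous.

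This does happen. Let $x^0_-(\lambda)$ be the abscissa where $\gamma_-$, followed backwards from the axis, first meets $\theta=0$; it is continuous in $\lambda>1$.
\begin{itemize}
\item As $\lambda\to1^+$, the orbit stays close to $\theta=\pi$ until $x$ is near $\pi$. Since $\theta'>2\lambda$ on $\{x>\pi/2,\ 0<\theta<\pi/2\}$, it then reaches $\theta=0$ before $x$ drops by $\pi/4$. Hence $x^0_->\pi/2$.
\item As $\lambda\to\infty$, after rescaling $X=\lambda r$ the orbit follows the profile $X=\sin\theta$ of a round CMC sphere from $(0,\pi)$ toward $(0,0)$. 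Hence $x^0_-\to0$.
\end{itemize}
So there is $\lambda^*$ with $x^0_-(\lambda^*)=\pi/2$. The point $(\pi/2,0)$ is fixed by the anti-symmetry $(x,\theta,s)\mapsto(\pi-x,-\theta,-s)$, so uniqueness forces the orbit to be anti-symmetric, and hence to end at $(\pi,-\pi)$.

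Therefore item~1 is false for such $\lambda^*$. This is consistent with the jump in the number of loops between $\lambda=1.2$ and $\lambda=1.3$ in the paper's figures. The closing-up case must be added to the statement, or those values excluded, before any proof can go through.

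For $\lambda<1$ this cannot occur, since $\gamma_-$ and its anti-symmetric image lie in different strips. Your treatment of that case is therefore fine once the sign slip is corrected.
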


\begin{theorem}\label{t2}
For any $\lambda>0$, a complete, rotational $\lambda$-translators not intersecting the rotation axis is a properly immersed annulus that loops a finite number of times, and whose ends converge to the CMC cylinders generated by $e_0$ and $e_1$. 

Moreover, if $\lambda\leq1$ then the number of loops is exactly one, and if $\lambda>1$ then the number of loops is at least the number of loops of $M_-$.
\end{theorem}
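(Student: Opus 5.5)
We would reduce the statement to an analysis of an autonomous planar system, exactly as is done for Theorem \ref{t1}. Fix $p_0$ and write a rotational surface about $L_0$ as generated by an arc-length parametrized curve $\gamma(s)=(r(s),z(s))$ in the strip $(0,\pi)\times\r$. Here $r$ is the spherical distance to $p_0$. Set $r'=\cos\theta$ and $z'=\sin\theta$, and choose the normal so that $\langle N,\partial_z\rangle=\cos\theta$. The mean curvature of the rotational surface is $2H=\theta'+\cot r\,\sin\theta$, so Eq. \eqref{eqlambdatranslator} becomes
$$
r'=\cos\theta,\qquad \theta'=2(\cos\theta+\lambda)-\cot r\,\sin\theta .
$$
The height $z$ does not appear in this system, so it is an autonomous vector field on the phase cylinder $\Theta=(0,\pi)\times\r/2\pi\r$. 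Vertical translations act trivially on orbits, and the surfaces not meeting the axis are exactly the orbits that never reach $\{r=0\}$ or $\{r=\pi\}$. The equilibria satisfy $\cos\theta=0$. They are $e_0=(\acot(2\lambda),\pi/2)$ and $e_1=(\pi-\acot(2\lambda),-\pi/2)$, which correspond to the two CMC vertical cylinders appearing in Theorem \ref{t1}. The orbits of $M_\pm$ are the two separatrices leaving the boundary points $(0,0)$ and $(0,\pi)$ (equivalently, arriving at $r=\pi$).

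\emph{Step 1: local behaviour at $e_0,e_1$.} We linearize at each equilibrium and compute trace and determinant. At $e_0$ the Jacobian is
$$
\begin{pmatrix}0&-1\\ \csc^2 r_0 & -2\end{pmatrix},\qquad \csc^2 r_0=1+4\lambda^2,
$$
so its trace is $-2$ and its determinant is $1+4\lambda^2$. Hence $e_0$ is an attractor, and its discriminant $4-4(1+4\lambda^2)<0$ shows it is a focus. At $e_1$ the computation gives the reversed sign of the trace, so $e_1$ is a repeller focus, possibly after tracking orientations. Which of them is the $\alpha$- or $\omega$-limit is fixed by this sign. We then translate the spiralling of orbits near $e_i$ into the statement that the corresponding end of the surface converges to the cylinder, as in the proof of Theorem \ref{t1}.

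\emph{Step 2: global structure of orbits.} We partition $\Theta$ by the curves $\cos\theta=0$ (where $r'=0$) and $\Gamma=\{\theta'=0\}$, that is $\cot r=2(\cos\theta+\lambda)/\sin\theta$. Each component of the complement is a monotonicity region for $(r,\theta)$. Following an orbit through these regions, we show three things: it cannot approach $r=0$ or $r=\pi$ unless it is one of the separatrices $M_\pm$; it cannot escape to $\theta\to\pm\infty$ without crossing $\Gamma$; and its $\alpha$- and $\omega$-limits are compact. Poincaré--Bendixson then leaves only two possibilities for each limit set: an equilibrium or a periodic orbit. To rule out periodic orbits we will look for a Dulac function. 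We would first try $B=\sin r$ or $B=e^{a\theta}\sin r$; the divergence of the raw field is $-2\sin\theta-\cot r\cos\theta$, which is not sign-definite. If no Dulac function works, we would instead use the fact that a closed orbit yields a closed rotational surface, and integrate the weighted flux formula (divergence of $\partial_z$ with density $e^z$) over it to reach a contradiction. Once periodic orbits are excluded, every orbit not meeting the axis runs from $e_1$ to $e_0$ (or the reverse). The generated surface is then a properly immersed annulus with the claimed ends, and the number of loops is finite because each half-turn of $\theta$ requires crossing $\Gamma$ a bounded number of times away from the equilibria.

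\emph{Step 3: counting loops.} A loop corresponds to a full turn of $\theta$, that is, to $\theta$ passing through the value where $\gamma$ is horizontal while $r'$ changes sign twice. For $\lambda\le1$ we show that $\theta$ is monotone along such orbits outside a single region: the curve $\Gamma$ then has no branch that is crossed twice, since the horizontal slice with $\lambda=1$ acts as a barrier. This forces exactly one loop. For $\lambda>1$ we use the separatrices. The orbit of $M_-$ separates $\Theta$, and any orbit from $e_1$ to $e_0$ must wind around it, lying outside of it in the ordering of the plane. Uniqueness of solutions prevents crossing, so the orbit performs at least as many turns as $M_-$. We expect the main obstacle to be Step 2, namely excluding periodic orbits and escape to the boundary. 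The loop counting in Step 3 depends on a careful description of $\Gamma$, which may also need case-by-case work.
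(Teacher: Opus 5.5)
Your framework (phase plane, foci $e_0,e_1$, Poincar\'e--Bendixson, barriers) matches the paper's. The gap is Step 3 for $\lambda\le1$.

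\textbf{Step 3 for $\lambda\le 1$.} The horizontal slice is an orbit only when $\lambda=1$, so for $\lambda<1$ you have no such barrier. Your claim that no branch of $\Gamma$ is crossed twice is also false: every non-axis orbit spirals around $e_0$ and $e_1$, and $\Gamma$ passes through both, so $\Gamma$ is crossed infinitely often.

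What actually controls the turning is the sign of $\theta'$ on the lines $\theta\equiv0$ and $\theta\equiv\pi$, where $\theta'=2(1+\lambda)$ and $\theta'=2(\lambda-1)$ respectively.
\begin{itemize}
\item For $\lambda=1$ the lines $\theta\equiv\pi$ are themselves orbits, and $\theta\equiv0$ is crossed only upwards. This confines every orbit between $e_1$ at $\theta=-\pi/2$ and $e_0$ at $\theta=\pi/2$.
\item For $\lambda<1$ the line $\theta\equiv\pi$ is crossed only downwards. An orbit can therefore leave the band $-\pi<\theta<0$ through $\theta=-\pi$ instead of $\theta=0$.
\end{itemize}
Such orbits do exist. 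Near $(0,-\pi)$, write $\theta=-\pi+\eta$; to leading order $\eta=(1-\lambda)x+C/x$. The case $C=0$ is $\gamma_-$. Every $C<0$ gives an orbit that crosses $\theta=-\pi$ at positive $x$, stays in $-2\pi<\theta<-\pi$, and tends to $e_0-(0,2\pi)$.

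For small $|C|$ these surfaces are close to $M_-\cup M_+$ joined by a small neck that turns clockwise, with the inner sheet above the outer one, so they appear to be embedded. Your argument does not see this family. Neither does the paper's proof, which only follows orbits through $(x_0,0)$. So the ``exactly one loop'' conclusion for $\lambda<1$ cannot come out of a barrier argument of the kind you sketch.

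\textbf{Excluding periodic orbits (Step 2).} You leave this undecided, and the fallback is misstated. A periodic orbit in $(r,\theta)$ does not give a closed surface: the height shifts by $\int_0^T\sin\theta\,ds$ per period, and on the phase cylinder the orbit could a priori wind in $\theta$.

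You are right that $B=\sin r$ fails: $\mathrm{div}(\sin r\,F)=-2\sin r\sin\theta$, so the paper's value $2\sin x$ is a miscalculation.

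The flux idea does work once it is written along the profile. Integrating $\Delta\h=2\langle N,\partial_z\rangle^2+2\lambda\langle N,\partial_z\rangle$ between two parallels gives $(\sin r\sin\theta+2\lambda\cos r)'=2\sin r\cos^2\theta\geq0$. This quantity is $2\pi$-periodic in $\theta$ and strictly increasing along every non-constant orbit. That excludes all periodic orbits on the cylinder, contractible or not, and with LaSalle it gives convergence to the foci.

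\textbf{The case $\lambda>1$ and finiteness.} For $\lambda>1$ the orbit of $M_-$ is an arc from the boundary to a focus, so it does not separate the cylinder. The comparison has to be done in the universal cover, as the paper does, using $\gamma_\pm$ and their $2\pi$-translates as barriers.

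Finiteness of the number of loops comes from each orbit converging to one fixed lift of each focus. It does not come from counting crossings of $\Gamma$.
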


Next, we detail the organization of the paper and highlight some of the main results. In Section \ref{s1} we introduce the basic notation and announce in Thm. \ref{thcomparsiontangency} the comparison and tangency principles in geometric terms, which are a consequence of the fact that Eq. \eqref{eqlambdatranslator} is of divergence type.

\section{Preliminaries and first results}\label{s1}

We regard the 2-sphere in $\r^3$ as the subset $\s^2=\{(x_1,x_2,x_3)\in\r^3\colon x_1^2+x_2^2+x_3^2=1\}$, hence the space $\s^2\times\r$ is isometrically immersed in $\r^4$ as 
$$
\s^2\times\r=\{(x_1,x_2,x_3,z)\in\r^4\colon x_1^2+x_2^2+x_3^2=1\}.
$$
There are two projections: the height function $\h(p,z)=z$ defined in the introduction and the projection $\pi:\s^2\times\r\to\s^2,\ \pi(x_1,x_2,x_3,z)=(x_1,x_2,x_3)$. Then, $\pi$ is a Riemannian submersion whose fibers $\pi^{-1}(x_1,x_2,x_3)$ are geodesics, and translations along these geodesics are induced as the flow of the Killing vector field $\partial_z$. 

Let us express Eq. \eqref{eqlambdatranslator} in non-parametric form. Let be $\Omega\subset\s^2$ and $u:\Omega\to\r$, and define $M=\{(p,u(p))\colon p\in\Omega\}$ the vertical graph of $u$. Denote by $\nabla$ and $\mathrm{div}$ the gradient and divergence operators in $\s^2$. The unit normal $N$ and the mean curvature of $M$ are
$$
N=\frac{1}{\sqrt{1+|\nabla u|^2}}((-\nabla u,0)+\partial_z),\qquad 2H=\mathrm{div}\frac{\nabla u}{\sqrt{1+|\nabla u|^2}},
$$
thus, Eq. \eqref{eqlambdatranslator}, or equivalently $H_\phi=\lambda$, is written as
\begin{equation}\label{eqlambdatranslatorgrafo}
\mathrm{div}\frac{\nabla u}{\sqrt{1+|\nabla u|^2}}=2\left(\frac{1}{\sqrt{1+|\nabla u|^2}}+\lambda\right).
\end{equation}
This equation is elliptic and of divergence type, hence the comparison and maximum principles of quasilinear, elliptic equations apply, see e.g. \cite{gitu}, Chapter 10. We formulate them in geometric terms. Let $M_1$ and $M_2$ be two surfaces in $\s^2\times\r$, possibly with boundaries $\partial M_1$ and $\partial M_2$. Let $p\in M_1\cap M_2$ be a common tangent point and take orientations $N_1,N_2$ such that $N_1(p)=N_2(p)$. If $p\in \partial M_1\cap\partial M_2$ we also assume that $\partial M_1$ and $\partial M_2$ are tangent at $p$. Around $p$, consider $M_1$ and $M_2$ as graphs of functions $u_i:\Omega\to\r$, where $\Omega$ is a domain of the tangent plane and fix $N_i(p)$ as the positive direction. With this reference system, we say that $M_1$ lies over $M_2$ around $p$, denoted by $M_1\geq M_2$, if $u_1\geq u_2$ in $\Omega$.

\begin{theorem}\label{thcomparsiontangency}
 Let $M_1$ and $M_2$ be two surfaces in $\s^2\times\r$ with weighted mean curvatures   $H_\phi^{M_1}$ and $H^{M_2}_\phi$, respectively. 
\begin{enumerate}
\item (Comparison principle) If $M_1\geq M_2$ around $p$, then $H_\phi^{M_1}(p)\geq H_\phi^{M_2}(p)$. 
\item (Tangency principle) Assume that $H_\phi^{M_1}$ and $H_\phi^{M_2}$ are constant and $H_\phi^{M_1}=H_\phi^{M_2}$. If $M_1\geq M_2$ around $p$, then $M_1$ and $M_2$ coincide in an open set around $p$.
\end{enumerate}
\end{theorem}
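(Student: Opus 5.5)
The plan is to reduce both statements to the classical comparison and maximum principles for quasilinear elliptic operators, as in \cite{gitu}, Chapter 10. We write the surfaces locally as graphs over a common domain and compare the two resulting functions. Around the common tangent point $p$, with $N_1(p)=N_2(p)$, we use the exponential map of $\s^2\times\r$ at $p$ restricted to the common tangent plane $T_p M_1=T_p M_2$. Then, on a small domain $\Omega$ of that plane, $M_i$ is the normal graph of a function $u_i$ over $\Omega$, with $u_i(0)=0$ and $\nabla u_i(0)=0$.

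In these coordinates, the weighted mean curvature of the graph of $u$ with upward normal is a quasilinear operator
$$
Q[u]=a^{ij}(x,u,\nabla u)\,\partial_{ij}u+b(x,u,\nabla u),
$$
where $a^{ij}$ is positive definite because the mean curvature operator is elliptic. The term $-\langle N,\partial_z\rangle$ depends only on the point and the normal, so it enters $b$ and does not affect the principal part. Thus $H_\phi^{M_i}=Q[u_i]$.

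For the comparison principle, set $w=u_1-u_2\geq0$ with $w(0)=0$. Then $0$ is an interior minimum (or a boundary point of $\Omega$), so $\nabla w(0)=0$ and $D^2w(0)\geq0$. Since $u_1$ and $u_2$ have the same value and gradient at $0$, the coefficients agree there, and
$$
Q[u_1](0)-Q[u_2](0)=a^{ij}(0,0,0)\,\partial_{ij}w(0)\geq0.
$$
This is the trace of the product of two positive semidefinite matrices. If $p$ lies on the boundary, the extra hypothesis that $\partial M_1$ and $\partial M_2$ are tangent at $p$ still gives $\nabla w(0)=0$. With half-domains $\Omega$, the inequality $u_1\geq u_2$ then yields the needed second-derivative sign in the tangential and normal directions, via the boundary point lemma argument.

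For the tangency principle, we subtract the two equations $Q[u_1]=Q[u_2]=\lambda$. By the mean value theorem applied along $tu_1+(1-t)u_2$, the difference $w$ satisfies a linear equation
$$
L w=\tilde a^{ij}\partial_{ij}w+\tilde b^i\partial_i w+\tilde c\, w=0,
$$
where $\tilde a^{ij}$ is uniformly elliptic and all coefficients are bounded on a small neighbourhood. The function $w\geq0$ attains the value $0$ at an interior point. Hopf's strong maximum principle therefore gives $w\equiv0$ near $0$; the sign of $\tilde c$ is irrelevant, since we may apply it to $-w\leq0$ with $\tilde c$ split as $\tilde c^+-\tilde c^-$, in the standard form for nonnegative solutions. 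In the boundary case, Hopf's boundary point lemma would force $\partial_\nu w(0)\neq0$, which contradicts the tangency of the boundaries. Hence $M_1=M_2$ in an open set around $p$.

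The main obstacle is checking that the graph representation over the tangent plane, rather than the vertical graph given by \eqref{eqlambdatranslatorgrafo}, still produces an elliptic operator whose zero-order part is smooth in $(x,u,\nabla u)$. This is true because ellipticity of the mean curvature operator is coordinate invariant, and $\langle N,\partial_z\rangle$ is a smooth function of the point and the normal. When the common tangent plane is non-vertical, one can simply work directly with \eqref{eqlambdatranslatorgrafo}.
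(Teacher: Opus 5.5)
Your proposal is correct and matches the paper's approach. The paper gives no separate proof: it only observes that the $\lambda$-translator equation is quasilinear, elliptic and of divergence type, and appeals to the classical comparison and maximum principles of \cite{gitu}, Chapter 10. Your steps (graph representation over the common tangent plane, linearization along $tu_1+(1-t)u_2$, Hopf's strong maximum principle and boundary point lemma, with the sign of $\tilde c$ irrelevant because $w$ vanishes at the contact point) are the standard details behind that appeal.

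One small precision for the boundary case. $\nabla w(0)=0$ comes from $T_pM_1=T_pM_2$. The tangency of $\partial M_1$ and $\partial M_2$, with the surfaces on the same side, is what gives the comparison domain a half-plane tangent cone and an interior sphere at $0$. Hence $D^2w(0)\geq0$ follows from $w\geq0$ by a second-order Taylor expansion and evenness of the quadratic form, and the comparison part needs no boundary point lemma.
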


The comparison and tangency principles allow us to prove the following result.

\begin{theorem}\label{thnonexistenceclosed1}
Let $|\lambda|\leq1$. Then, there do not exist closed $\lambda$-translators.
\end{theorem}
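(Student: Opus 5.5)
Suppose, arguing by contradiction, that $M$ is a closed (compact, without boundary) $\lambda$-translator with $|\lambda|\le 1$. The plan is to compare $M$ with horizontal slices $\Pi_{z_0}=\s^2\times\{z_0\}$ at its highest and lowest points, using the comparison principle of Theorem \ref{thcomparsiontangency}, and to use the tangency principle to rule out the boundary cases $\lambda=\pm1$.

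Since $M$ is compact, the height function $\h$ restricted to $M$ attains a maximum $z_1$ at some $p_1$ and a minimum $z_2$ at some $p_2$. At these points $M$ is tangent to the horizontal slices $\Pi_{z_1}$ and $\Pi_{z_2}$, so $N(p_i)=\pm\partial_z$. Moreover $M$ lies below $\Pi_{z_1}$ and above $\Pi_{z_2}$. Let $\epsilon_i\in\{\pm1\}$ be defined by $N(p_i)=\epsilon_i\partial_z$.

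Orient the slices by $\epsilon_i\partial_z$. By item 3 of the examples, $\Pi$ oriented by $\epsilon\partial_z$ has $H=0$ and $\langle N,\partial_z\rangle=\epsilon$, so its weighted mean curvature is $H_\phi=H-\langle N,\partial_z\rangle=-\epsilon$. For $M$ we have $H_\phi=\lambda$.

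At the maximum $p_1$, with $N(p_1)$ chosen as the positive direction:
\begin{itemize}
\item if $\epsilon_1=1$, then $M$ lies below the slice, i.e. $\Pi_{z_1}\ge M$, and the comparison principle gives $-1\ge\lambda$;
\item if $\epsilon_1=-1$, the positive direction is downward, so $M\ge\Pi_{z_1}$, giving $\lambda\ge 1$.
\end{itemize}
At the minimum $p_2$ the situation is symmetric:
\begin{itemize}
\item if $\epsilon_2=1$, then $M\ge\Pi_{z_2}$, giving $\lambda\ge-1$;
\item if $\epsilon_2=-1$, then $\Pi_{z_2}\ge M$, giving $1\ge\lambda$.
\end{itemize}

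The real content is that both inequalities point the same way once we use the orientation consistently. If $M$ is embedded, it bounds a region and we may take $N$ to be, say, the outer normal. Then $\epsilon_1=1$ and $\epsilon_2=-1$, which forces $\lambda\le-1$ and $\lambda\le1$. With the inner normal we get $\epsilon_1=-1$ and $\epsilon_2=1$, forcing $\lambda\ge1$.

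So for $|\lambda|<1$ we get a contradiction immediately. For $\lambda=\pm1$ the equality case applies: $M$ touches a slice with the same constant weighted mean curvature from one side, so the tangency principle (item 2 of Theorem \ref{thcomparsiontangency}) gives that $M$ coincides with the slice on an open set. A connectedness/analytic continuation argument then gives $M=\Pi_{z_1}$. The same reasoning at $p_2$ gives $M=\Pi_{z_2}$, hence $z_1=z_2$ and $M$ is a horizontal slice.

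The main obstacle is precisely this case. A slice is compact, so either the statement implicitly excludes slices (they are "trivial"), or, more likely, at $\lambda=1$ the orientation of a slice fixes which sign is admissible. One must check carefully that the outer/inner consistency argument excludes $M$ being a slice, or else interpret the theorem accordingly.

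I also need to justify the consistency of $\epsilon_1,\epsilon_2$ for immersed $M$, where there is no enclosed region. Here I would instead argue with the sign relation: the case analysis yields $\lambda\le-1$ or $\lambda\ge1$ at each extremum. To derive a contradiction for $|\lambda|<1$ this suffices, since any sign choice forces $|\lambda|\ge1$. The equality case $|\lambda|=1$ is then handled by the tangency principle as above.
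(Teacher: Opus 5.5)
Your comparison at the highest point $p_1$ is the paper's argument, and it settles $|\lambda|<1$: $\epsilon_1=1$ forces $\lambda\le-1$, and $\epsilon_1=-1$ forces $\lambda\ge1$. Nothing else is needed. Two parts of your write-up should be removed:
\begin{itemize}
\item The lowest point carries no information, since there you only get $\lambda\ge-1$ or $\lambda\le1$. Your closing claim that each extremum forces $|\lambda|\ge1$ is therefore false for $p_2$; fortunately $p_1$ alone suffices.
\item The outer/inner normal paragraph is not legitimate. $N$ is part of the data of a $\lambda$-translator: reversing it turns a $\lambda$-translator into a $-\lambda$-translator. Also, a closed embedded surface in $\s^2\times\r$ need not bound a compact region; a horizontal slice is the obvious example.
\end{itemize}

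At $|\lambda|=1$ your hesitation is justified, and it cannot be resolved in favour of the statement. The slice $\Pi_{z_0}$ oriented by $-\lambda\partial_z$ satisfies $H=0=\langle N,\partial_z\rangle+\lambda$, so it is a closed $\lambda$-translator for $\lambda=\pm1$. The paper itself records this in the third example of the introduction and as $M_-$ in Theorem \ref{t1} for $\lambda=1$. So your guess that the orientation rules the slice out is wrong, and you should discard it. The paper's own proof asserts a contradiction with the tangency principle at this step. But the tangency principle only gives local coincidence with $\Pi_{z_0}$, exactly as you found, so the paper's proof has the same hole.

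The statement is therefore true only for $|\lambda|<1$. For $|\lambda|=1$, your argument proves the correct replacement: a connected closed $\lambda$-translator is a horizontal slice with $N=-\lambda\partial_z$. To make your continuation step precise, consider the set $\{p\in M:\h(p)=z_1\}$:
\begin{itemize}
\item It is closed.
\item It is open: at each of its points $\h$ is maximal, the comparison forces $N=-\lambda\partial_z$, and the tangency principle then applies there.
\item Hence $M\subset\Pi_{z_1}$. A compact connected surface immersed in $\Pi_{z_1}\cong\s^2$ is a covering of it, hence equal to it.
\end{itemize}
Your parallel step at $p_2$ is unnecessary, and it is not automatic anyway, since equality there depends on $\epsilon_2$.
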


\begin{proof}
The proof is by contradiction, so assume that $M$ is a closed $\lambda$-translator. Consider the family of horizontal planes $\Pi_t$, and recall that they are $\epsilon$-translators for the orientation $-\epsilon\partial_z$, $\epsilon=\pm1$. Let $p_0\in M$ the point where $\h$ attains a global maximum and let $z_0=\h(p_0)$, hence $N(p_0)=\pm\partial_z$. In this situation, $M$ and the horizontal plane $\Pi_{z_0}$ are tangent at $p_0$. We distinguish cases depending on the orientation of $M$ at $p_0$.

If $N(p_0)=\partial_z$, then we orient $\Pi_{z_0}$ by $\partial_z$ and hence $\Pi_{z_0}\geq M$ around $p_0$. The comparison principle yields
$$
-1=H_\phi^{\Pi_{z_0}}(p_0)\geq H_\phi^M(p_0)=\lambda.
$$
If $|\lambda|<1$ we arrive to a contradiction, thus the only possibility is $\lambda=-1$. However, $\Pi_{z_0}$ is a $-1$-translator for the orientation $\partial_z$ and this would yield
a contradiction with the tangency principle.

If $N(p_0)=-\partial_z$, then we orient $\Pi_{z_0}$ by $-\partial_z$ and now $M\geq\Pi_{z_0}$ around $p_0$, hence
$$
\lambda=H_\phi^M(p_0)\geq H_\phi^{\Pi_{z_0}}=1,
$$
and arguing as before necessarily $\lambda=1$. This contradicts again the tangency principle and we are done.
\end{proof}

The case $|\lambda|>1$ is trickier. In $\r^3$ and $\H^2\times\r$, the proof uses the following fact. Let $\mathbb{M}^2(\kappa)$ denote the complete, simply connected surface of constant curvature $\kappa$. A well-known formula states that if $M$ is a surface in $\mathbb{M}^2(\kappa)\times\r$ and we consider the restriction of $\h$ to $M$, then $\nabla\h=(\partial_z)^T=\partial_z-\langle N,\partial_z\rangle N$. Consequently, $\Delta\h=2H\langle N,\partial_z\rangle$, and if $M$ is a $\lambda$-translator then
\begin{equation}\label{eqlaplacianoh}
\Delta\h=2\langle N,\partial_z\rangle^2+2\lambda\langle N,\partial_z\rangle.
\end{equation}
Thus, Stokes theorem yields that $M$ is included in a vertical cylinder $\gamma\times\r$, which contradicts its closeness. This proof strongly relies in the fact that any closed surface is a 2-cycle that encloses a bounded 3-chain, hence Stokes theorem can be applied to prove that $\int_M\langle N,\partial_z\rangle=0$.

Although Eq. \eqref{eqlaplacianoh} also holds in $\s^2\times\r$, a closed surface may not be the boundary of a 3-chain, as the topology of the base of $\s^2\times\r$ has further implications on the properties of closed surfaces. Indeed, its second homology group is $H_2(\s^2\times\r)=\mathbb{Z}$ and a generator of such group is any horizontal plane $\s^2\times\{z_0\}$, which is closed. Furthermore, we can orient $\s^2\times\{z_0\}$ by $\partial_z$, hence $\langle N,\partial_z\rangle=1$ and
$$
\int_{\s^2\times\{z_0\}}\langle N,\partial_z\rangle=4\pi.
$$
In the following result, which is valid for any $\lambda\in\r$, we make an extra assumption on the homology class of the $\lambda$-translator.

\begin{theorem}\label{thnonexistenceclosed2}
There do not exist closed, null-homologous $\lambda$-translators.
\end{theorem}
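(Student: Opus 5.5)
The plan is to argue by contradiction, following the Euclidean and $\H^2\times\r$ strategy recalled above, with null-homology playing the role of the enclosed 3-chain. Assume $M$ is a closed, null-homologous $\lambda$-translator with unit normal $N$.

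\textbf{Step 1: the flux of $\partial_z$ vanishes.} Since $M$ is null-homologous, it bounds a compact 3-chain $\Omega$ in $\s^2\times\r$; for embedded $M$ one can take $\Omega$ to be a compact domain with $\partial\Omega=M$. The field $\partial_z$ is Killing, hence divergence free. The divergence theorem (Stokes for chains in the immersed case) then gives
$$
\int_M\langle N,\partial_z\rangle\,dA=\int_\Omega \mathrm{div}\,\partial_z\,dV=0 .
$$
Equivalently, the closed $2$-form $\iota_{\partial_z}dV$ restricted to $M$ integrates to $\langle\, \cdot\,,\cdot\,\rangle$-flux $\int_M\langle N,\partial_z\rangle$, and this integral vanishes on null-homologous cycles. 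This step is exactly where the hypothesis is used: the horizontal slices show that the flux can be $4\pi$ otherwise.

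\textbf{Step 2: integrate the Laplacian of the height.} Integrating \eqref{eqlaplacianoh} over the closed surface $M$ gives
$$
0=\int_M\Delta\h=2\int_M\langle N,\partial_z\rangle^2+2\lambda\int_M\langle N,\partial_z\rangle .
$$
By Step 1 the last term is zero, so $\int_M\langle N,\partial_z\rangle^2=0$. Hence $\langle N,\partial_z\rangle\equiv0$ on $M$.

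\textbf{Step 3: conclude.} Since $\langle N,\partial_z\rangle\equiv 0$, the vector $\partial_z$ is everywhere tangent to $M$. Its integral curves, the vertical fibers, are complete geodesics, and they stay in $M$ because $M$ is closed and invariant under the flow of a tangent field. So $M$ is a union of full fibers, that is, $M=\pi^{-1}(\gamma)$ for some curve $\gamma\subset\s^2$. Such a surface is non-compact, which contradicts the compactness of $M$.

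\textbf{Main obstacle.} The only delicate point is Step 1 when $M$ is merely immersed. There one uses that $M$ defines a null-homologous 2-cycle, $[M]=\partial c$, together with the fact that $\iota_{\partial_z}dV$ is a closed form (Cartan's formula and $\mathcal{L}_{\partial_z}dV=0$). Stokes' theorem for chains then yields the vanishing flux. The rest is routine.
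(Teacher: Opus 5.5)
Your proposal is correct and follows the paper's proof step by step. You show that $\int_M\langle N,\partial_z\rangle$ vanishes by Stokes on a bounding 3-chain, integrate $\Delta\h$ to force $\langle N,\partial_z\rangle\equiv0$, and then contradict compactness once $\partial_z$ is tangent to $M$; your additions (the closed form $\iota_{\partial_z}dV$ for the immersed case, and the flow argument producing a full vertical fiber in $M$) only make explicit what the paper leaves implicit.
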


\begin{proof}
If $M$ is null-homologous in $H_2(\s^2\times\r)$ then $M$ is the boundary of a 3-chain $W$ and Stokes theorem applied to the vector field $\partial_z$ in $W$ yields that $\int_M\langle N,\partial_z\rangle=0$. Now, let us consider the restriction of the height function $\h$ to $M$ and integrate
$$
0=\int_M\Delta\h=2\int_M\langle N,\partial_z\rangle^2+2\lambda\int_M\langle N,\partial_z\rangle=2\int_M\langle N,\partial_z\rangle^2.
$$
This implies $\langle N,\partial_z\rangle=0$ in $M$ and thus $M=\gamma\times\r$ with $\gamma\subset\s^2$ a curve, a contradiction since $M$ is closed.
\end{proof}

The comparison and tangency principles can be used to prove a half-space type result for compact $\lambda$-translators with boundary in a horizontal plane. 
\begin{proposition}\label{propaunlado}
Let $|\lambda|\leq 1$ and $M$ be a compact $\lambda$-translator with boundary in a horizontal plane $\s^2\times\{z_0\}$. Then, $\max_M\h=z_0$.
\end{proposition}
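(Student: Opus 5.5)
The plan is to repeat the argument of Theorem \ref{thnonexistenceclosed1}, now with the comparison made at an interior point where $\h$ is largest. Since $\partial M\subset\s^2\times\{z_0\}$ is nonempty, we have $\max_M\h\geq z_0$, so it is enough to rule out $\max_M\h>z_0$. We may assume $M$ is connected, since otherwise we argue component by component; a closed component without boundary is already excluded by Theorem \ref{thnonexistenceclosed1}. Assume, for contradiction, that $z_1:=\max_M\h>z_0$. By compactness the maximum is attained at some $p_1\in M$, and $p_1$ cannot lie on $\partial M$ because $\h=z_0<z_1$ there. Hence $p_1$ is an interior point and $M$ is tangent to the horizontal plane $\Pi_{z_1}$ at $p_1$, so $N(p_1)=\pm\partial_z$. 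Moreover, $M$ lies below $\Pi_{z_1}$ in $\s^2\times\r$.

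We split into cases according to the orientation at $p_1$, exactly as in Theorem \ref{thnonexistenceclosed1}.
\begin{itemize}
\item If $N(p_1)=\partial_z$, orient $\Pi_{z_1}$ by $\partial_z$. Then $\Pi_{z_1}\geq M$ around $p_1$, and the comparison principle of Theorem \ref{thcomparsiontangency} gives $-1=H_\phi^{\Pi_{z_1}}\geq H^M_\phi=\lambda$. This is impossible when $|\lambda|\le1$ unless $\lambda=-1$.
\item If $N(p_1)=-\partial_z$, orient $\Pi_{z_1}$ by $-\partial_z$. Then $M\geq \Pi_{z_1}$ around $p_1$, so $\lambda\geq 1$, which forces $\lambda=1$.
\end{itemize}

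The extremal cases $\lambda=\mp1$ are the only delicate step; this is where the present situation differs from the closed case. There, the tangency principle gives that $M$ and $\Pi_{z_1}$ coincide in a neighbourhood of $p_1$, but this local coincidence is not yet a contradiction by itself. To finish, consider the set
$$A=\{p\in \mathrm{int}(M)\colon \h(p)=z_1\}.$$
This set is nonempty, since $p_1\in A$, and it is closed in $\mathrm{int}(M)$. It is also open: every point of $A$ is again an interior maximum of $\h$ with $N=\pm\partial_z$, and by continuity of $N$ on the connected neighbourhood already in $\Pi_{z_1}$ the orientation is the same as at $p_1$. So the same tangency argument applies at every point of $A$.

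Since $\mathrm{int}(M)$ is connected, it follows that $A=\mathrm{int}(M)$. Hence $\h\equiv z_1$ on $\mathrm{int}(M)$, and by continuity also on $\overline{\mathrm{int}(M)}\supset\partial M$. This contradicts $\h=z_0\neq z_1$ on $\partial M$. We conclude that $\max_M\h=z_0$.
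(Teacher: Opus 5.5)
Your core argument is the paper's: take an interior maximum of $\h$, compare $M$ with $\Pi_{z_1}$ oriented by $\pm\partial_z$, and invoke the tangency principle in the extremal cases $\lambda=\mp1$. Your open--closed argument on $A$ is more careful than the paper. The paper simply declares ``a contradiction with the tangency principle'', but that principle by itself only gives local coincidence with $\Pi_{z_1}$. Globalizing via connectedness of $\mathrm{int}(M)$ and $\partial M\neq\emptyset$ is exactly what turns this into a contradiction.

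One small repair there: at a point $q\in A$ far from $p_1$, continuity of $N$ from $p_1$ tells you nothing about $N(q)$. Instead, the orientation at $q$ is forced by the comparison principle applied at $q$ itself. For $\lambda=1$ the option $N(q)=\partial_z$ would give $-1\geq 1$, and symmetrically for $\lambda=-1$. So the tangency principle applies at every point of $A$.

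The step that genuinely fails is the reduction to connected $M$. Theorem \ref{thnonexistenceclosed1} is false for $|\lambda|=1$. The slice $\s^2\times\{z_1\}$ oriented by $-\partial_z$ (resp.\ $\partial_z$) is a closed $1$-translator (resp.\ $-1$-translator), as the third example in the Introduction and the case $\lambda=1$ of Theorem \ref{t1} show. The proof of that theorem makes the same leap from local coincidence to a contradiction, and there the leap cannot be repaired.

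Consequently, for $\lambda=\pm1$ the proposition is actually false for disconnected $M$. Take a compact geodesic disk in $\s^2\times\{z_0\}$ together with $\s^2\times\{z_1\}$, $z_1>z_0$, both oriented by $-\lambda\,\partial_z$. This is a compact $\lambda$-translator with boundary in $\s^2\times\{z_0\}$ and $\max_M\h=z_1$. So connectedness, together with $\partial M\neq\emptyset$, must be a standing hypothesis of the statement rather than something you reduce to. Under that hypothesis your proof is complete. For $|\lambda|<1$ your reduction is valid as written, and is not even needed, since the comparison at the maximum already gives a contradiction.
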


\begin{proof}
Assume that there exists $p_0\in\mathrm{int}(M)$ such that $\h(p_0)>z_0$; without losing generality, assume that $\h_{|_M}$ attains its maximum at $p_0$. Then, $N(p_0)=\pm\partial_z$. If $N(p_0)=\partial_z$ then we orient $\Pi_{\h(p_0)}$ by $\partial_z$ and hence $\Pi_{\h(p_0)}\geq M$ around $p_0$. Since $H_\phi(p_0)=\lambda$ and $H_\phi^{\Pi_{\h(p_0)}}=-1$ the comparison principle yields $-1\geq\lambda$ and the assumption on $\lambda$ yields $\lambda=-1$. This is however a contradiction with the tangency principle. If $N(p_0)=-\partial_z$ then $H_\phi=\lambda$ and $H_\phi^{\Pi_{\h(p_0)}}=1$, and $M\geq\Pi_{\h(p_0)}$ around $p_0$. Again, the comparison principle and the assumption on $\lambda$ yields $\lambda=1$, which contradicts again the tangency principle.
\end{proof}

As a consequence of this result and since we can apply reflections about vertical planes since Eq. \eqref{eqlambdatranslator} is invariant under such ambient isometries, we obtain the following symmetry result. The proof strongly relies on Alexandrov reflection technique, see e.g. \cite{lop} for a detailed outline for the CMC case.
\begin{corollary}
Let $|\lambda|\leq1$ and $M$ be a compact $\lambda$-translator with boundary in a horizontal plane $\s^2\times\{z_0\}$. Assume that there exists a geodesic $\gamma\subset\s^2$ such that $\gamma\times\{z_0\}$ separates $\partial M$ into two connected components that are graphs over $\gamma\times\{z_0\}$. Then, $M$ is symmetric about the vertical plane $\gamma\times\r$. In particular, if $\partial M$ is a circle, then $M$ is rotational.
\end{corollary}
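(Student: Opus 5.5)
We use the Alexandrov reflection technique with respect to vertical planes, i.e.\ the totally geodesic surfaces $P_\beta=\beta\times\r$ over geodesics $\beta\subset\s^2$. Reflection across such a plane is an isometry of $\s^2\times\r$ that fixes $\partial_z$. So if $M$ is a $\lambda$-translator, its reflection, with the reflected normal, is again a $\lambda$-translator, and the tangency principle of Thm.~\ref{thcomparsiontangency} applies to $M$ and its reflected pieces.

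\textbf{Setup.} Let $\Pi=\s^2\times\{z_0\}$. By Prop.~\ref{propaunlado}, $M$ lies in the closed half-space $\h\le z_0$. Strengthening this by the tangency principle against horizontal planes (equivalently, by the boundary point version applied to $\Pi$), $M$ touches $\Pi$ only along $\partial M$ and is transversal to $\Pi$ there. Take the one-parameter family of geodesics $\gamma_t\subset\s^2$ obtained by rotating $\gamma$ about an axis lying in the plane of $\gamma$, i.e.\ through the poles of the family, so that the $\gamma_t$ foliate a hemisphere containing $\pi(M)$ minus these poles. Start with $t$ so that $\gamma_t\times\r$ does not meet $M$; this is possible because $M$ is compact and, by the graph hypothesis on $\partial M$ and the half-space property, $\pi(M)$ avoids a neighbourhood of the poles.

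\textbf{Reflection procedure.} Move $t$ towards $\gamma_0=\gamma$. For each $t$ let $M_t^+$ be the part of $M$ on the side already swept, and $(M_t^+)^*$ its reflection across $P_{\gamma_t}$. Standard reflection arguments show that for $t$ slightly past first contact, $(M_t^+)^*$ lies inside the region bounded by $M$ and $\Pi$, and that $M_t^+$ is a graph over $P_{\gamma_t}$. Continue until the first $t_1$ at which one of the following happens:
\begin{enumerate}
\item $(M_{t_1}^+)^*$ touches $M$ at an interior point;
\item $P_{\gamma_{t_1}}$ becomes orthogonal to $M$ at a point of $M\cap P_{\gamma_{t_1}}$;
\item the reflected boundary $(\partial M_{t_1}^+)^*$ touches $\partial M$.
\end{enumerate}
Case 3 cannot occur before $t=0$, because each component of $\partial M$ is a graph over $\gamma\times\{z_0\}$. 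In cases 1 and 2, the interior or boundary tangency principle, with orientations matched (both normals point out of the enclosed domain), gives that $M$ is symmetric about $P_{\gamma_{t_1}}$. That symmetry would force $\partial M$ to be symmetric as well, which contradicts the graph condition unless $t_1=0$. So the procedure reaches $t=0$, giving $(M_0^+)^*$ on one side of $M_0^-$. Running the same argument from the opposite side gives the reverse inequality, hence equality, and $M$ is symmetric about $\gamma\times\r$.

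\textbf{Rotational case.} If $\partial M$ is a circle, every geodesic through its centre satisfies the hypothesis, so $M$ is symmetric about every vertical plane through the axis, and is therefore rotational.

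\textbf{Main obstacle.} The delicate point is the boundary tangency. In case 2, and at points of $\partial M$, one needs the Hopf-type boundary version of the tangency principle, together with the transversality of $M$ to $\Pi$ along $\partial M$. The latter is exactly where $|\lambda|\le1$ enters, through Prop.~\ref{propaunlado}. I would handle it by the standard argument as in \cite{lop}, writing $M$ and its reflection locally as graphs over $P_{\gamma_{t_1}}$ and using that Eq.~\eqref{eqlambdatranslatorgrafo} is uniformly elliptic.
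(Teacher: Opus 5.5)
You follow the paper's route: Prop.~\ref{propaunlado} for the half-space, then Alexandrov reflection in vertical planes, with the bi-graph hypothesis excluding boundary--boundary accidents. However, the step ``Case 3 cannot occur before $t=0$, because each component of $\partial M$ is a graph over $\gamma\times\{z_0\}$'' fails.

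Let $\pm q\in\gamma$ be the common points of your great circles $\gamma_t$. Use longitude--colatitude coordinates $(\phi,\varrho)$ about $q$, so that $\gamma_t=\{\phi=t\}\cup\{\phi=t+\pi\}$ and the reflection across $\gamma_t$ is $(\phi,\varrho)\mapsto(2t-\phi,\varrho)$. Read ``graph over $\gamma$'' along the orbits $\{\varrho=\mathrm{const}\}$ of these reflections. Then the two components of $\partial M$ are $\phi=f_1(\varrho)<0$ and $\phi=f_2(\varrho)>0$ over a common interval, with $f_1=f_2=0$ at its endpoints.

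A reflected boundary point $(2t-f_2(\varrho),\varrho)$ can only hit $\partial M$ at $(f_1(\varrho),\varrho)$, and it does so exactly when $t=\tfrac12(f_1+f_2)(\varrho)$. Consequently:
\begin{itemize}
\item the sweep with $t\downarrow0$ produces a boundary--boundary contact at $t=\sup\tfrac12(f_1+f_2)>0$ unless $f_1+f_2\le0$;
\item the sweep from the other side stops at $\inf\tfrac12(f_1+f_2)<0$ unless $f_1+f_2\ge0$.
\end{itemize}
At such a contact the two pieces share a boundary point but need not be tangent. No tangency principle applies, so the procedure halts without information. Both sweeps reach $t=0$ only if $f_1=-f_2$, i.e.\ only if $\partial M$ is symmetric about $\gamma$.

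This hypothesis is genuinely missing from the statement, since its conclusion forces $\partial M$ to be symmetric about $\gamma$. The paper's proof uses it tacitly at exactly this point (``no accident occurs between two non-tangent boundary points''). Once it is added, which is automatic for a circle centred on $\gamma$, the reflected boundary stays strictly inside the planar domain bounded by $\partial M$ for $t\neq0$, and your Case 3 is harmless.

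Two further claims are unsupported, and the paper is silent on them too.

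First, the starting position. The half-space property controls only heights, and the graph hypothesis constrains only $\partial M$. Nothing keeps $\pi(M)$ away from $\pm q$, so your starting position may not exist. For example, take $0<\lambda<1$ and the part of $M_-$ below a high horizontal slice. It is a compact embedded $\lambda$-translator bounded by a circle and lying below that slice. Yet its end approaches the cylinder of radius $\pi-\acot(2\lambda)>\pi/2$, so its projection is a disk of radius larger than $\pi/2$. Such a disk meets every great circle, so no vertical plane is disjoint from $M$. You need an extra assumption (e.g.\ $\pi(M)$ contained in an open hemisphere bounded by some $\gamma_t$) or a different way to begin.

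Second, comparing $(M_t^+)^*$ with ``the region bounded by $M$ and $\Pi$'' presupposes that $M$ is embedded, which is not among the hypotheses.
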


\begin{proof}
By Prop. \ref{propaunlado} we know that $M$ lies contained in the half-space $\{z\leq z_0\}$ determined by $\Pi_{z_0}$. This allows us to apply the classical Alexandrov reflection technique, as no accident occurs between an interior and boundary point. Furthemore, since $\partial M$ is a bi-graph over $\gamma\times\{z_0\}$ then no accident occurs between two non-tangent boundary points. Thus, Alexandrov reflection technique yields that $\gamma\times\r$ is a plane of symmetry of $M$. In the case that $\partial M$ is a circle, then any vertical plane containing its center is a plane of symmetry of $M$ and thus $M$ is rotational.
\end{proof}
Obviously, if $|\lambda|>1$ and we make the extra assumption that $M$ lies contained in one of the half-spaces determined by the plane where its boundary lies, the result above still holds.

We finish this section by classifying all $\lambda$-translators invariant by vertical translations.

\begin{proposition}
Suppose that $M$ is a surface invariant by the group of vertical translations generated by $\partial_z$. Then, $M$ is a $\lambda$-translator if and only if its generating curve is a closed circle in $\s^2$ of constant geodesic curvature $\kappa_g=2\lambda$.
\end{proposition}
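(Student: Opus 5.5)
A surface invariant under the vertical translations generated by $\partial_z$ is a union of fibers, so I would first write it as a vertical cylinder over a curve. Concretely, $M=\pi^{-1}(\gamma)=\gamma\times\r$, where $\gamma=\pi(M)$ is a regular curve in $\s^2$. For instance, take $\gamma$ to be the intersection of $M$ with a horizontal plane $\Pi_{z_0}$, which is transverse because $\partial_z$ is tangent to $M$. The plan is to compute the two sides of Eq. \eqref{eqlambdatranslator} on such a cylinder and compare them.

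Next I would fix a unit normal. Let $\gamma$ be parametrized by arc length, with unit normal $\n_\gamma$ in $\s^2$. Since $\partial_z$ is tangent to $M$ everywhere, the unit normal of $M$ is $N=(\n_\gamma,0)$, and so $\langle N,\partial_z\rangle=0$. This is exactly the observation already used for vertical planes and cylinders in the introduction.

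Then I would compute $H$. Because $\s^2\times\r$ is a Riemannian product, the frame $\{\gamma',\partial_z\}$ on $M$ diagonalizes the second fundamental form. The vertical direction is a geodesic of the ambient space contained in $M$, and $\partial_z$ is parallel, so the principal curvature in that direction vanishes. The normal curvature in the direction $\gamma'$ equals the geodesic curvature $\kappa_g$ of $\gamma$ in $\s^2$, since $D_{\gamma'}\gamma'$ is horizontal and equals the $\s^2$-covariant derivative. Hence $2H=\kappa_g$, with the sign convention fixed by the choice of $\n_\gamma$.

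Plugging into Eq. \eqref{eqlambdatranslator} gives $\kappa_g/2=\lambda$, that is, $\kappa_g=2\lambda$ is constant along $\gamma$. Curves of constant geodesic curvature in $\s^2$ are arcs of circles. By completeness or connectedness of $M$, $\gamma$ is then the whole closed circle. Conversely, for such a circle the same computation shows $H=\lambda=\langle N,\partial_z\rangle+\lambda$, which recovers example 2 of the introduction. The only delicate point is the sign and orientation bookkeeping in $2H=\kappa_g$. Reversing $N$ changes $\lambda$ to $-\lambda$, which matches reversing the orientation of the circle, so the statement should be read up to this choice of orientation.
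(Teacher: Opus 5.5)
Your proposal is correct and follows the paper's route. You write $M=\gamma\times\r$, use $\langle N,\partial_z\rangle=0$ and $2H=\kappa_g$, and conclude $\kappa_g=2\lambda$; you also give more than the paper does, namely the principal-curvature justification of $2H=\kappa_g$ and the converse. One correction: connectedness alone does not force $\gamma$ to be the whole circle, since an arc times $\r$ is connected and invariant, so it is completeness of $M$ that yields the closed circle.
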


\begin{proof}
Let $M$ a surface invariant by the vertical translations generated by $\partial_z$. In particular, the fibers of the submersion $\pi:\s^2\times\r\to\s^2$ are tangent to $M$ and thus $M=\gamma\times\r$, where $\gamma\subset\s^2$ is a curve. In such a case, the mean curvature $H$ of $M$ is $H=\kappa_\gamma/2$. Since $M$ is tangent to $\partial_z$, one has $\langle N,\partial_z\rangle=0$ and hence $H=\lambda$. This implies $\kappa_\gamma=2\lambda$ and consquently $\gamma$ is a curve of constant geodesic curvature in $\s^2$, which proves the result.
\end{proof}

\section{Rotational $\lambda$-translators}\label{s2}

In this section we derive some formulae regarding rotational $\lambda$-translators and prove the existence of radial solutions of Eq. \eqref{eqlambdatranslator}. Without losing generality, we choose as rotation axis the fiber passing through the north pole, $\{(0,0,1)\}\times\r$, hence a rotational surface in $\s^2\times\r$ is locally parametrized by
$$
\psi(s,\phi)=(\sin x(s)\cos\phi,\sin x(s)\sin\phi,\cos x(s),z(s)),\qquad s\in I,\ \phi\in\r.
$$
The function $x(s)$ has range $x(s)\in(0,\pi)$ and is the distance to the rotation axis. The function $z(s)$ agrees with the restriction of the height function to $M$. The curve $\beta(s)=(x(s),z(s))$ is assumed to be arc-length parametrized, hence $x'(s)=\cos\theta(s),\ z'(s)=\sin\theta(s)$ for a smooth function $\theta(s)$. Note that $\theta'(s)$ is just the curvature of $\beta$ as a planar curve of $\r^2$. The unit normal is 
$$
N=(-\cos x(s)\sin\theta(s)\cos \phi,-\cos x(s)\sin\theta(s)\sin \phi,\sin x(s)\sin\theta(s),\cos\theta(s)),
$$
and hence $\langle N,\partial_z\rangle=\cos\theta(s)$. From now on, we omit the dependence on the variable $s$ unless explicitly necessary. The mean curvature is $2H=\theta'+\sin\theta \cot x$ and the condition of being a $\lambda$-translator is $H=\cos\theta+\lambda$. We conclude that the following system must be fulfilled
\begin{equation}\label{eqsystem}
\left\lbrace
\begin{array}{l}
x'=\cos\theta,\\
z'=\sin\theta,\\
\theta'=2(\cos\theta+\lambda)-\sin\theta\cot x.
\end{array}
\right.
\end{equation}
A particular case of \eqref{eqsystem} appears when $\theta$ is constant, hence $\beta(s)=(x(s),z(s))$ is a straight line and the angle $\langle N,\partial_z\rangle$ is constant. From the first equation in \eqref{eqsystem}, we deduce $x(s)=s\cos\theta+a,\ a\in\r$, and substituting in the third equation of \eqref{eqsystem} yields
$$
2(\cos\theta+\lambda)=\sin\theta\cot(s\cos\theta+a).
$$
Since the left-hand side is constant so it must be the right-hand side, hence $\cos\theta=0$ and thus $\theta=\pi/2+k\pi,\ k\in\mathbb{Z}$, which yields $x(s)=\acot(2\lambda)$ and $M$ is a CMC cylinder of radius $\acot(2\lambda)$. We have proved the following result.

\begin{proposition}
The only rotational $\lambda$-translators making a constant angle with the vector field $\partial_z$ are circular cylinders of radius $\acot(2\lambda)$.
\end{proposition}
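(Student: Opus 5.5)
The argument works directly with the system \eqref{eqsystem} for the arc-length parametrized generating curve $\beta(s)=(x(s),z(s))$ of a rotational $\lambda$-translator. Making a constant angle with $\partial_z$ means $\langle N,\partial_z\rangle=\cos\theta$ is constant along $\beta$. I would first reduce this to the statement that $\theta$ itself is constant. The function $\theta$ is continuous, and the preimage under $\cos$ of a fixed value is a discrete set. Hence $\theta$ is constant on the (connected) interval $I$, and $\theta'\equiv0$.

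With $\theta$ constant, the first equation of \eqref{eqsystem} integrates to $x(s)=s\cos\theta+a$ with $a\in\r$. The third equation then becomes the identity
$$
2(\cos\theta+\lambda)=\sin\theta\cot(s\cos\theta+a),\qquad s\in I.
$$
The left-hand side does not depend on $s$. I would split into two cases according to whether $\cos\theta$ vanishes.

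\emph{Case $\cos\theta\neq0$.} Then $s\mapsto s\cos\theta+a$ is a non-constant affine function on an open interval. Since $\cot$ is strictly monotone on $(0,\pi)$, the right-hand side can be constant only if $\sin\theta=0$. In that case $\cos\theta=\pm1$, and the equation forces $\lambda=\mp1$. This subcase is the one delicate point. It describes the horizontal slices, which are not rotational surfaces of the form under consideration, since their generating curve meets the axis. I would exclude it explicitly, either as degenerate (the profile is horizontal) or because such profiles are excluded by the parametrization when the surface is required to be complete away from the axis. Alternatively, I would simply note it as the horizontal-slice exception already listed among the examples.

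\emph{Case $\cos\theta=0$.} Then $\theta=\pi/2+k\pi$ and $x\equiv a$ is constant, and the equation reduces to $2\lambda=\pm\cot a$. With the orientation chosen so that the sign is $+$, this gives $a=\acot(2\lambda)$, and $M$ is the vertical cylinder of that radius. Conversely, the example of vertical cylinders in the introduction shows that these cylinders are indeed $\lambda$-translators. The main obstacle I expect is handling the degenerate horizontal subcase rigorously, rather than the computation itself.
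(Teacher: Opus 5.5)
Your computation is the paper's own: integrate $x'=\cos\theta$ to $x=s\cos\theta+a$, substitute into the third equation of \eqref{eqsystem}, and compare the constant $2(\cos\theta+\lambda)$ with $\sin\theta\cot(s\cos\theta+a)$. You are more careful than the paper in two places. First, you justify that a constant angle forces $\theta$ itself to be constant; the paper simply starts from ``$\theta$ constant''. Second, you notice the subcase $\sin\theta=0$. The paper's sentence ``the left-hand side is constant so it must be the right-hand side, hence $\cos\theta=0$'' overlooks it.

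You should not try to argue that subcase away, though. Your claim that horizontal slices are ``not rotational surfaces of the form under consideration, since their generating curve meets the axis'' is wrong in this paper's framework. Profiles reaching the axis are exactly what produce $M_\pm$, and the paper itself identifies $M_-$ for $\lambda=1$ with $\s^2\times\{z_0\}$ (profile $\theta\equiv\pi$). This agrees with the third example of the introduction: $\Pi_{z_0}$ oriented by $\epsilon\partial_z$ is a $(-\epsilon)$-translator. Calling the case ``degenerate'' does not exclude it either. So for $\lambda=\pm1$ the proposition as stated is false. The honest conclusion of your computation is your third option: the rotational $\lambda$-translators with constant angle are the cylinders, together with the horizontal slices when $|\lambda|=1$.

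A smaller point concerns the case $\cos\theta=0$. You cannot ``choose the orientation'' to make the sign $+$, since reversing $N$ replaces $\lambda$ by $-\lambda$. The branch $\sin\theta=-1$ gives $a=\pi-\acot(2\lambda)$, which is the equilibrium $e_1$ of Section 3. Its normal points toward the antipodal fiber $\{(0,0,-1)\}\times\r$, so it is the cylinder of radius $\acot(2\lambda)$ about that fiber. The conclusion therefore survives once the radius is measured from the appropriate axis. The paper's proof passes over this branch as well.
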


Now, we prove that if a rotational $\lambda$-translator meets the rotation axis, it does so orthogonally.

\begin{proposition}\label{proporthogonalintersection}
If the profile curve of a rotational $\lambda$-translator intersects the rotation axis, then it does so at an orthogonal angle.
\end{proposition}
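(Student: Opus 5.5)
Suppose the profile curve $\beta(s)=(x(s),z(s))$ reaches the rotation axis at $s_0$. The intersection point may be either the north pole fiber ($x(s_0)=0$) or the south pole fiber ($x(s_0)=\pi$); by the symmetry $x\mapsto\pi-x$ we may treat $x\to0^+$, the other case being identical. I will argue through the third equation of \eqref{eqsystem}, using that $\theta$ is a smooth function along the regular arc-length curve $\beta$ up to $s_0$. The claim is that $\cos\theta(s_0)=0$, i.e.\ the tangent $\beta'(s_0)=(\cos\theta,\sin\theta)$ is vertical, so $\beta$ meets the axis orthogonally.

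\textbf{Geometric route.} The surface $M$ is smooth at the point $q$ where it meets the axis. Hence $M$ has a well-defined tangent plane $T_qM$ there, and this plane is invariant under all rotations about the axis. The only planes with this property are the horizontal one (normal $\pm\partial_z$) or planes containing the axis. The latter is impossible: such a plane is not rotation invariant unless the surface near $q$ contains a neighborhood of the axis direction, which is incompatible with $M$ being a smooth rotational surface crossing the axis at an isolated profile point. Thus $N(q)=\pm\partial_z$, i.e.\ $\cos\theta(s_0)=\pm1$, so $\sin\theta(s_0)=0$. In the $(x,z)$ plane this means $\beta'(s_0)=(\pm1,0)$ is perpendicular to the axis $\{x=0\}$.

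\textbf{Analytic check via the ODE.} I would also confirm this directly from \eqref{eqsystem}, assuming $\theta$ and $\theta'$ stay bounded near $s_0$ (smoothness of $M$). Since $\cot x\to+\infty$ as $x\to0^+$, boundedness of
$$
\theta'=2(\cos\theta+\lambda)-\sin\theta\cot x
$$
forces $\sin\theta(s)\cot x(s)$ to remain bounded. This gives $\sin\theta(s_0)=0$, hence $\theta(s_0)\in\{0,\pi\}$ modulo $2\pi$ and $x'(s_0)=\pm1\neq0$. So the curve indeed reaches the axis transversally and horizontally, consistent with the geometric route.

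\textbf{Main obstacle.} The delicate point is justifying that $\theta'$ stays bounded, i.e.\ that the profile curve cannot, for instance, spiral or approach the axis tangentially. I would handle this by taking the geometric statement as primary: smoothness of $M$ at $q$ plus rotational invariance pins down $T_qM$. Further, $\theta'(s_0)$ is a principal curvature of $M$ at $q$, hence finite. The ODE computation then serves only as confirmation.
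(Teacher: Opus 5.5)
There is a genuine gap: the regularity of $M$ at the axis point is assumed rather than derived from the equation. Your geometric route presupposes that $M$ is a smooth surface at the axis point $q$. But once that is granted, the statement holds for every smooth rotational surface and has nothing to do with the $\lambda$-translator equation. The content of the proposition is that the equation rules out a conical singularity. The surface is parametrized only for $x\in(0,\pi)$, and what must be shown is this: if the profile curve of a solution of \eqref{eqsystem} reaches $x=0$ with a limiting angle $\theta(s_0)$, then $\sin\theta(s_0)=0$.

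This is also how the result is used later. In the proof of Theorem \ref{t1} it excludes that $\gamma_-$ converges to a point $(0,\theta_0)$ with $\theta_0\in(0,\pi/2)$, and there no smoothness across the axis is available. The same objection applies to your analytic check: assuming $\theta'$ bounded, or that $\theta'(s_0)$ is a finite principal curvature, is again smoothness at $q$ in disguise. Two smaller slips. Your opening claim $\cos\theta(s_0)=0$ is backwards; it would mean $\beta$ is tangent to the axis, whereas the correct target is $\sin\theta(s_0)=0$, which is what you actually derive. And vertical planes through the axis are simply not rotation invariant, so the horizontal plane is the only invariant one.

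The paper avoids any regularity assumption by using a first integral. It computes $(x\sin\theta)'=x'z'(1-x\cot x)+2xx'^2+2\lambda xx'$, whose right-hand side stays bounded near the axis even if $\theta'$ blows up. Integrating from $s_0$ gives \eqref{eqfirstintegral}. Dividing by $x(s)$, letting $s\to s_0$ and using $1-x\cot x\to0$ then yields $\sin\theta(s_0)=0$.

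Your ODE idea can also be rescued using only continuity of $\theta$ at $s_0$, which is needed anyway to speak of an angle. Suppose $\sin\theta(s_0)\neq0$. Since $|x'|\le1$, we have $0<x(s)\le|s-s_0|$. So near $s_0$ the term $\sin\theta\cot x$ has a fixed sign and absolute value at least $c/|s-s_0|$, while $2(\cos\theta+\lambda)$ stays bounded. Integrating the third equation of \eqref{eqsystem} then shows that $\theta$ diverges logarithmically as $s\to s_0$, a contradiction.
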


\begin{proof}
From the equations of system \eqref{eqsystem}, we have
\begin{align*}
(xz')'&=x'z'+xz''=x'z'+xx'\theta'\\
&=x'z'+xx'(2(x'+\lambda)-z'\cot x)\\
&=x'z'(1-x\cot x)+2xx'^2+2\lambda xx'.
\end{align*}
For fixed $s_0\in\r$, we integrate from $s_0$ to $s$, arriving to
$$
x(s)\sin\theta(s)-x(s_0)\sin\theta(s_0)=\int_{s_0}^sx'(t)z'(t)(1-x(t)\cot x(t))+2x(t)x'(t)^2\,dt+\lambda(x(s)^2-x(s_0)^2).
$$
Now, assume that the intersection with the rotation axis is at $s_0$, hence $x(s_0)=0$ and consequently,
\begin{equation}\label{eqfirstintegral}
x(s)\sin\theta(s)-\lambda x(s)^2=\int_{s_0}^sx'(t)z'(t)(1-x(t)\cot x(t))+2x(t)x'(t)^2\,dt.
\end{equation}
Let $f(s)=1-x(s)\cot x(s)$ and recall that $f$ extends continuously to $s=s_0$ by $f(s_0)=0$. For $s\neq s_0$ close enough to $s_0$ we have $x(s)>0$. Dividing the above equation by $x(s)$ we have
$$
\sin\theta(s)-\lambda x(s)=\frac{1}{x(s)}\int_{s_0}^sx'(t)z'(t)f(t)+2x(t)x'(t)^2\,dt.
$$
Now we let $s\to s_0$ and apply the L'Hôpital rule in the right-hand side, obtaining
$$
\sin\theta(s_0)=\sin\theta(s_0)f(s_0)=0,
$$
concluding $\theta(s_0)=k\pi$ and therefore the intersection is orthogonal.
\end{proof}

If $M$ is a $\lambda$-translator for the orientation $N$, then $M$ is a $-\lambda$-translator for the orientation $-N$. Consequently and up to a change of the orientation, \emph{hereinafter $\lambda$ will be assumed to be positive}.

\subsection{Existence of radial solutions}

In Prop. \ref{proporthogonalintersection} we proved that any rotational $\lambda$-translator approaching the rotation axis must do it orthogonally; in particular, if an intersection occurs. To prove the existence of a rotational $\lambda$-translator intersecting the rotation axis we cannot invoke standard existence theory since system \eqref{eqsystem} is singular at $x=0$. Nonetheless, in \cite{bue3} we study prescribed mean curvature graphs in homogeneous spaces and in particular in $\s^2\times\r$. In particular, we prove the existence of radial solutions to the prescribed mean curvature equation over a small-enough disk and in particular we prove the existence and uniqueness of two $\lambda$-translators intersecting orthogonally the rotation axis with unit normal $\pm\partial_z$, respectively, at such intersection.

\begin{definition}\label{defM+-}
We define $M_+$ (resp. $M_-$) the rotational $\lambda$-translator interesecting orthogonally the rotation axis with upwards (resp. downwards) orientation.
\end{definition}

Let us focus on the surfaces $M_+$ and $M_-$ and the third equation of \eqref{eqsystem},
$$
\theta'=2\lambda+2\cos\theta-\sin\theta\cot x.
$$
\begin{enumerate}
\item If $\lambda=1$, a trivial solution is $x(s)=s,\ z(s)=z_0\in\r$ and $\theta(s)=\pi$, that is $\alpha(s)$ is a great circle in $\s^2$ and the corresponding $\lambda$-translator $M_-$ is the horizontal slice $\s^2\times\{z_0\}$.
\item For the surface $M_+$ with the upwards orientation, we know that $u''(0)=1+\lambda$ and hence the surface $M_+$ is always strictly convex around the rotation axis. For the surface $M_-$ with the downwards orientation, we know that $u''(0)=1-\lambda$. If $\lambda=1$ we know that $u$ is constant and $M_-=\s^2\times \{z_0\}$. If $\lambda>1$ then $u''(0)<0$ and $M_-$ bends towards its unit normal. If $\lambda<1$ then $u''(0)>0$ and $M_-$ bends opposite to its unit normal.
\item From \eqref{eqfirstintegral} we deduce $x(s)\sin\theta(s)-\lambda x(s)^2>0$ for every $s$ and hence $x(s)\in(0,1/\lambda)$.
\item From Eq. \eqref{eqfirstintegral} again and the fact that the function $x\mapsto 1-x\cot x$ is positive for $x\in(0,\pi)$, we deduce that $\theta$ cannot attain the value $\pi$. As a consequence, \emph{there do not exist rotational $\lambda$-translators with the topology of a sphere}. Similarly, $\theta$ does not attain again the value $0$ because if $s_0>0$ is the first time where $\theta(s_0)=0$, then $\theta'(s_1)\leq0$, but \eqref{eqsystem} gives $\theta'(s_1)=2(1+\lambda)$, a contradiction. In particular, $\theta$ is a bounded function $\theta(s)\in(0,\pi)$ and the solutions of \eqref{eqsystem} are defined for $s\in(0,\infty)$.
\end{enumerate}

\section{Rotational $\lambda$-translators}\label{s3}

The objective of this section is to classify the rotational $\lambda$-translators. For that, note that system \eqref{eqsystem} can be studied by the first and third equations, since the second one depends on the first and third. This encodes the fact that \eqref{eqlambdatranslator} is preserved when we move a $\lambda$-translator by vertical translations. Hence we focus on the 2-dimensional system
\begin{equation}\label{eqphaseplane}
\left\lbrace
\begin{array}{l}
x'=\cos\theta,\\
\theta'=2(\cos\theta+\lambda)-\sin\theta\cot x.
\end{array}
\right.
\end{equation}

The phase plane is the set
$$
\mathcal{R}=\{(x,\theta)\colon x\in(0,\pi),\ \theta\in\r\},
$$
and the orbits $\gamma(t)=(x(t),\theta(t))$ are the solutions of \eqref{eqphaseplane} when regarded in $\R$. The $2\pi$-periodicity of the functions involving $\theta$ implies that the structure of $\mathcal{R}$ is $2\pi$-periodic in the $\theta$-direction. Thus, when we describe the elements of $\mathcal{R}$, we must take into account that they are defined modulo a discrete translation of length $2k\pi$ in the $\theta$-direction. 

It is immediate to verify that if $\gamma(s)=(x(s),\theta(s))$ is a solution of \eqref{eqphaseplane}, then $\delta(s)=(\pi-x(-s),-\theta(-s))$ is also a solution. Geometrically, $\R$ is anti-symmetric with respect to the lines $x=\pi/2$ and $\theta=0$. By the $2\pi$-periodicity in the $\theta$-direction, this also hols for $\theta=2k\pi,\ k\in\mathbb{Z}$. In particular, all the geometric elements that describe $\R$ share this anti-symmetry. There are two equilibrium points,
$$
e_0=(\mathrm{arccot}(2\lambda),\pi/2),\qquad e_1^k=(\pi-\mathrm{arccot}(2\lambda),-\pi/2),
$$
which correspond to vertical cylinders of constant mean curvature $\lambda$. The motion of each orbit in $\R$ is determined by the signs of $x'$ and $\theta'$. We have $x'>0$ if $\theta\in(-\pi/2,\pi/2)$, and $x'<0$ if $\theta \in(-\pi,-\pi/2)\cup(\pi/2,\pi)$, modulo $2\pi$. For the sign of $\theta'$, we must first study whether $\theta'=0$. For that, we define
$$
\Gamma(\theta)=\acot\frac{2(\cos\theta+\lambda)}{\sin\theta}.
$$
Then, the $\theta$-coordinate of an orbit $\gamma$ has a critical point if and only if the coordinates of $\gamma$ are related by $x=\Gamma(\theta)$. At this point, we must say a word of caution. Since $x\in(0,\pi)$ and in particular $x>0$, we must take care with the negative values of $\Gamma(\theta)\in(-\pi/2,0)$. By the anti-symmetry, these values correspond to $x=\pi-\Gamma(\theta)$. Thus, we define the curve
$$
\Gamma=\R\cap\left\lbrace
\begin{array}{lcc}
(\Gamma(\theta),\theta) & \mathrm{if} & \Gamma(\theta)>0,\\
(\pi-\Gamma(\theta),\theta) & \mathrm{if} & \Gamma(\theta)<0
\end{array}
\right\rbrace.
$$
Therefore, $\Gamma$ and the lines $\theta=\pm\pi/2$ separate $\R$ into connected components where the coordinate functions of any orbit are strictly monotonous. See Fig. \ref{figphaseplane}.
\begin{figure}[h]
\begin{center}
\includegraphics[width=.4\textwidth]{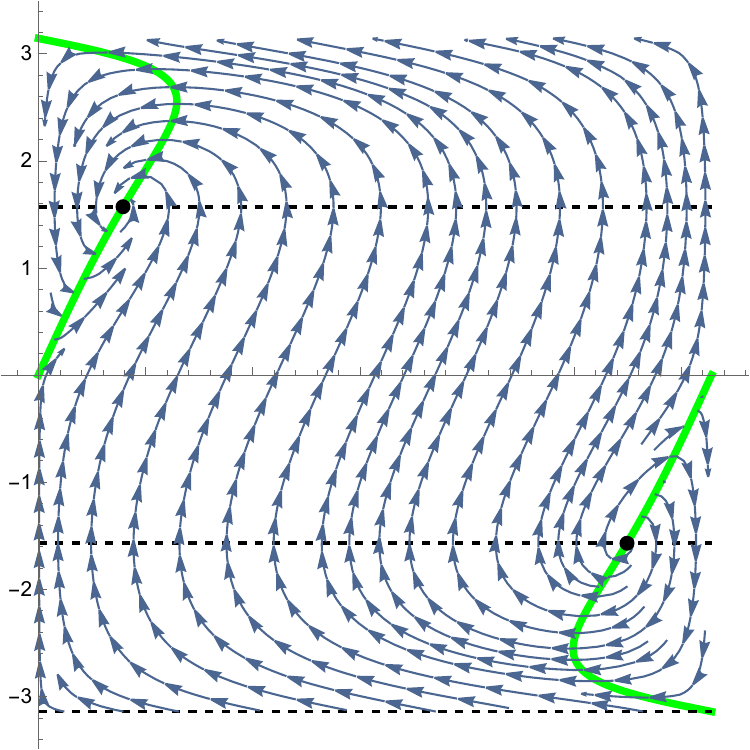}
\end{center}
\caption{The phase plane with the equilibrium points $e_0,e_1$ and the connected components of the curve $\Gamma$ that determines the monotonicity regions. Here, $\lambda>1$.}
\label{figphaseplane}
\end{figure}

As usual, we analyze the structure of the equilibrium points by linearizing the system. Let
$$
F(x,\theta)=\left(
\begin{matrix}
\cos\theta\\
2(\cos\theta+\lambda)-\sin\theta\cot x
\end{matrix}
\right).
$$
The jacobian matrix of $F$ evaluated at $e_0,e_1$ is
$$
JF(e_0)=\left(
\begin{matrix}
0&-1\\
1+4\lambda^2&-2
\end{matrix}
\right),\qquad JF(e_1)=\left(
\begin{matrix}
0&1\\
-1-4\lambda^2&2
\end{matrix}
\right),
$$
whose eigenvalues are $-1\pm 2\lambda i$ and $1\pm 2\lambda i$, respectively. Hence, the linearized structure around each equilibrium is as follows: the eigenvalues of $JF(e_0)$ are complex and of negative real part, hence $e_0$ is a stable spiral point and every orbit close enough to $e_0$ converges to it by spiraling around infinitely-many times. The eigenvalues of $F(e_1)$ are also complex but with positive real part, and thus $e_1$ is an unstable spiral point, hence every orbit close enough to $e_1$ \emph{escapes} from it.

The next result forbids the existence of closed orbits in $\R$. It follows from Bendixon-Dulac theorem, a classical result which appears in most textbooks on differential equations.
\begin{proposition}\label{propnonexistenceclosedorbits}
There do not exist closed orbits in $\mathcal{R}$.
\end{proposition}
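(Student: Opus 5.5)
The plan is to avoid guessing a Dulac function and instead use a first-integral identity along a periodic solution. This is the one-dimensional analogue of the argument with Eq. \eqref{eqlaplacianoh} in Thm. \ref{thnonexistenceclosed2}, and it also covers orbits that wind once around the cylinder $\R/2\pi\mathbb{Z}$ in the $\theta$-direction. Suppose that $\gamma(s)=(x(s),\theta(s))$ is a closed orbit of \eqref{eqphaseplane} that is not an equilibrium. Then there is $T>0$ with $x(s+T)=x(s)$ and $\theta(s+T)=\theta(s)+2k\pi$ for some $k\in\mathbb{Z}$. Every function of the form $G(x,\cos\theta,\sin\theta)$ is therefore $T$-periodic along $\gamma$, so the integral of its derivative over $[0,T]$ vanishes.

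The key computation is the derivative of $\sin x\sin\theta$ along solutions of \eqref{eqphaseplane}:
$$
(\sin x\sin\theta)'=\cos x\cos\theta\sin\theta+\sin x\cos\theta\bigl(2(\cos\theta+\lambda)-\sin\theta\cot x\bigr)=2\sin x\cos\theta(\cos\theta+\lambda).
$$
Here the singular term $\sin\theta\cot x$ cancels exactly, which is why this choice works. Geometrically, this is the rotational version of $\Delta\h=2\langle N,\partial_z\rangle^2+2\lambda\langle N,\partial_z\rangle$: the factor $\sin x$ is the length element of the parallels, and $\langle N,\partial_z\rangle=\cos\theta$.

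Integrating over one period gives
$$
0=2\int_0^T\sin x\cos^2\theta\,ds+2\lambda\int_0^T\sin x\cos\theta\,ds.
$$
The second integral vanishes, because $\sin x\cos\theta=\sin x\,x'=(-\cos x)'$ and $\cos x$ is $T$-periodic. Hence $\int_0^T\sin x\cos^2\theta\,ds=0$. Since $x\in(0,\pi)$ we have $\sin x>0$, so $\cos\theta\equiv0$ along $\gamma$.

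Then $\theta$ is constant, which forces $\theta'=0$. By the earlier proposition on constant-angle solutions, the orbit is one of the equilibria $e_0$ or $e_1$, and this contradicts the assumption that $\gamma$ is a non-trivial closed orbit.

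The only delicate point I expect is the bookkeeping of the $2\pi$-periodicity. One has to make sure that \emph{closed orbit} includes the case in which $\theta$ increases by $2k\pi$ over a period, and the argument above handles this uniformly. If one prefers to phrase the result via Bendixson--Dulac as announced, the same identity shows that the weight $B=\sin x$ makes the relevant integrand sign-definite once it is combined with the exact term $\lambda(-\cos x)'$. I would nevertheless present the direct integral argument, since it is cleaner and does not require the orbit to bound a region in $\R$.
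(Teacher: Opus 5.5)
Your proof is correct, and it takes a genuinely different route from the paper. The paper uses Bendixson--Dulac. It multiplies $(P,Q)$ by the weight $\sin x$, asserts $\mathrm{div}(\sin x\,(P,Q))=2\sin x$, and applies the divergence theorem on the region $\Omega\subset\R$ enclosed by a closed orbit. A direct computation, however, gives $\mathrm{div}(\sin x\,(P,Q))=\cos x\cos\theta-2\sin x\sin\theta-\cos x\cos\theta=-2\sin x\sin\theta$, which changes sign across the lines $\theta=k\pi$. So the paper's argument as written needs an extra step. One such step: on $\theta=k\pi$ one has $\theta'=2(\lambda+\cos k\pi)$, which has a fixed sign, or the line is itself an orbit when $\lambda=1$. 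Hence a closed orbit and its interior lie in a strip $k\pi<\theta<(k+1)\pi$, where $\sin\theta$ does not change sign.

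Your identity avoids this issue entirely. It uses only periodicity along the orbit, needs no enclosed region, and works for every $\lambda$. It also excludes orbits that close up only modulo $\theta\mapsto\theta+2\pi$, after $\theta$ advances by $2k\pi$. That is the case that matters once the phase plane is used modulo $2\pi$, as in the proof of Thm.~\ref{t1}.

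Your computation in fact proves more. Set $L=\sin x\sin\theta+2\lambda\cos x$; then $L'=2\sin x\cos^2\theta\geq0$ along solutions. On $\{\cos\theta=0\}$ one has $\theta'=2\lambda\mp\cot x$, which is nonzero away from $e_0,e_1$. Hence $L$ is strictly increasing along every non-constant orbit, so it is a global Lyapunov function. For $\lambda>0$ it attains its maximum $\sqrt{1+4\lambda^2}$ at $e_0$ and its minimum $-\sqrt{1+4\lambda^2}$ at $e_1$.

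You should delete your closing remark. With $B=\sin x$ the Dulac divergence is $-2\sin x\sin\theta$, which is not sign-definite, so your argument is not Bendixson--Dulac in disguise.
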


\begin{proof}
Let us write system \eqref{eqphaseplane} as
$$
\left(
\begin{matrix}
x\\ \theta
\end{matrix}
\right)'=
\left(
\begin{matrix}
\cos\theta\\
2(\cos\theta+\lambda)-\sin\theta\cot x
\end{matrix}
\right)=
\left(
\begin{matrix}
P(x,\theta)\\Q(x,\theta)
\end{matrix}
\right),
$$
and define the function $\textbf{r}:\R\to\r$ and the vector field $V:\R\to\R$
$$
\textbf{r}(x,\theta)=\sin x,\qquad V(x,\theta)=\textbf{r}(x,\theta)(P(x,\theta),Q(x,\theta)).
$$
It is clear that $\mathrm{div} V=2\sin x$, which has constant sign in $\R$. Arguing by contradiction, assume that $\overline{\gamma}$ is a closed orbit in $\R$ and name $\Omega$ to its inner region. The divergence theorem yields
$$
0\neq\int_\Omega\mathrm{div}V=\int_{\overline{\gamma}}\langle V,\textbf{n}_{\overline{\gamma}}\rangle=0,
$$
where $\textbf{n}_{\overline{\gamma}}$ is the unit normal to $\overline{\gamma}$. Recall that the last integral vanishes since $V$ is everywhere tangent to $\overline{\gamma}$. This contradiction proves the result.
\end{proof}

As a consequence of this result and Poincaré-Bendixon theorem, we conclude the following.

\begin{corollary}
Let $\gamma$ be an orbit in $\R$, and assume that $\gamma(s)$ stays at a bounded distance of $e_k$ as $|s|\to\infty$. Then, $\gamma(s)\to e_0$ as $s\to\infty$, or $\gamma(s)\to e_1$ as $s\to-\infty$.
\end{corollary}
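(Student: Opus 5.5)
We derive the corollary from the Poincaré--Bendixson theorem together with Proposition \ref{propnonexistenceclosedorbits}, plus the local spiral structure of $e_0$ and $e_1$. Since the phase plane $\R$ is $2\pi$-periodic in $\theta$, we first pass to the quotient cylinder $\R/2\pi\mathbb{Z}$ in the $\theta$-direction, or equivalently work in a strip of $\R$.

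The hypothesis that $\gamma(s)$ stays at bounded distance from the equilibria says that the relevant half-orbit lies in a compact subset of $\R$. In particular, it stays away from the singular lines $x=0$ and $x=\pi$, and it is confined in $\theta$. Hence the $\omega$-limit set (for $s\to\infty$) and the $\alpha$-limit set (for $s\to-\infty$) are nonempty, compact, connected and invariant.

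By Poincaré--Bendixson, a nonempty compact limit set of a planar flow with finitely many equilibria is one of three things: an equilibrium, a periodic orbit, or a union of equilibria joined by connecting orbits (a cycle graph).

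1. Periodic orbits that are contractible in the strip are excluded by Proposition \ref{propnonexistenceclosedorbits}.
2. The Dulac argument with weight $\textbf{r}=\sin x$ also rules out cycle graphs. A cycle graph bounds a region $\Omega$ on whose boundary $V=\textbf{r}F$ is tangent, so the divergence theorem again gives $0\neq\int_\Omega 2\sin x=0$.
3. What remains is a single equilibrium, $e_0$ or $e_1$, as the limit set.

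By the linearization, $e_0$ has eigenvalues $-1\pm2\lambda i$, so it is a stable spiral. It cannot be the $\alpha$-limit of any orbit other than the constant one, since every nearby orbit leaves any small neighbourhood of $e_0$ in backward time. Symmetrically, $e_1$ has eigenvalues $1\pm2\lambda i$, so it cannot be an $\omega$-limit. Therefore, if the forward half-orbit is bounded, its limit must be $e_0$ as $s\to\infty$. If the backward half-orbit is bounded, its limit must be $e_1$ as $s\to-\infty$. This is exactly the dichotomy in the statement.

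\textbf{Main obstacle.} The delicate step is the topology of $\R$ when treated as a cylinder in $\theta$. A closed orbit could in principle wind once around the cylinder, meaning $\theta$ increases by $2\pi$ while $x$ returns to its initial value. Such an orbit is not contractible, so the plain Bendixson--Dulac argument in Proposition \ref{propnonexistenceclosedorbits} does not apply to it directly.

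To handle this case, we would apply the divergence theorem to the annular region between such an orbit and its translate by $2\pi$. Equivalently, we integrate $\operatorname{div}V$ over the region between the orbit and the line $\theta=\theta_0$, using periodicity. The boundary terms cancel, and again we get a contradiction with $\operatorname{div}V=2\sin x>0$.

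An alternative is to work directly in the universal cover $\R$. There, a bounded orbit cannot wind, because winding makes $\theta$ unbounded. So the cylinder issue disappears once we read ``bounded distance'' in $\R$ itself, which is the interpretation we would adopt.
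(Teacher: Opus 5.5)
Your route is the paper's: its whole justification is ``Prop.~\ref{propnonexistenceclosedorbits} plus Poincar\'e--Bendixson''. Your final step is also correct: $e_0$ is a sink and $e_1$ a source, so only $e_0$ can be an $\omega$-limit and only $e_1$ an $\alpha$-limit. The gap is in step 2, and in the first fix in your ``main obstacle'' paragraph. Both use $\mathrm{div}\,V=2\sin x$, which is false. For $V=\sin x\,(\cos\theta,\,2(\cos\theta+\lambda)-\sin\theta\cot x)$ one gets
\[
\mathrm{div}\,V=\cos x\cos\theta+\bigl(-2\sin x\sin\theta-\cos x\cos\theta\bigr)=-2\sin x\sin\theta ,
\]
which changes sign across every line $\theta=k\pi$. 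No positive weight $B$ can have constant-sign divergence on all of $\R$. At an equilibrium $\mathrm{div}(BF)=B\,\mathrm{tr}\,JF$, which equals $-2B<0$ at $e_0$ and $2B>0$ at $e_1$. So neither your cycle-graph exclusion nor your divergence argument for winding orbits goes through as written. The same slip is in the paper's proof of Prop.~\ref{propnonexistenceclosedorbits}. Your step 1 therefore cites a statement that is true, but whose printed proof is not valid.

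Both pieces are easy to repair.

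\begin{itemize}
\item Step 2 is unnecessary. Every equilibrium is a hyperbolic focus: the translates of $e_0$ are sinks and those of $e_1$ are sources. An $\omega$-limit set containing a sink is that sink, and a source belongs to the $\omega$-limit set of no orbit other than itself. Hence a Poincar\'e--Bendixson limit set containing an equilibrium is a single equilibrium, and no cycle graph can occur.
\item For periodic orbits, contractible or winding, use $W=\sin x\sin\theta+2\lambda\cos x$. From \eqref{eqphaseplane} one gets $W'=2\sin x\cos^2\theta\ge 0$; this is the rotational form of the flux identity behind \eqref{eqlaplacianoh}. It vanishes only on $\{\cos\theta=0\}$, whose only invariant subsets are equilibria. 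Hence $W$ increases strictly along every non-constant orbit. Since $W$ is $2\pi$-periodic in $\theta$, this excludes closed orbits both in the strip and on the cylinder. Combined with LaSalle's principle, it even gives the corollary directly, without Poincar\'e--Bendixson.
\item Alternatively, repair the Dulac argument. One has $\theta'=2(1+\lambda)>0$ on $\theta\equiv 0$, and $\theta'=2(\lambda-1)$ on $\theta\equiv\pi$, which is an invariant line when $\lambda=1$. So a closed orbit cannot cross these lines and stays in a band $k\pi<\theta<(k+1)\pi$, where $-2\sin x\sin\theta$ has a fixed sign.
\end{itemize}

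Finally, bounded Euclidean distance to $e_k$ does not by itself keep the orbit away from $x=0$ and $x=\pi$. Relative compactness in $\R$ is a reading of the hypothesis that you, like the paper, need. State it as an assumption rather than as a consequence.
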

In particular, if an orbit starts to spiral around an equilibrium $e_k$, it must converge to it. We now announce the consequence in the phase plane of the existence of radial $\lambda$-translators intersecting orthogonally the rotation axis.

\begin{proposition}\label{propexistenceradial}
There exist two orbits, $\gamma_+$ and $\gamma_-$, corresponding to the rotational $\lambda$-translators $M_+$ and $M_-$, respectively, given by Def. \ref{defM+-}.
\end{proposition}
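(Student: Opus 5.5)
Since the existence and uniqueness of $M_+$ and $M_-$ as radial solutions has already been established (we take this from \cite{bue3}, as recalled before Def. \ref{defM+-}), the task reduces to translating these surfaces into orbits of \eqref{eqphaseplane} and identifying where those orbits emanate from in the boundary of $\R$. For each of $M_\pm$, we take the profile curve $\beta_\pm(s)=(x_\pm(s),z_\pm(s))$ parametrized by arc length, with $s=0$ the point on the axis. We write $x_\pm'=\cos\theta_\pm$ and $z_\pm'=\sin\theta_\pm$. For $s>0$ we have $x_\pm(s)\in(0,\pi)$, so by construction $(x_\pm,\theta_\pm)$ solves \eqref{eqphaseplane} on some interval $(0,\epsilon)$. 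We define $\gamma_\pm(s)=(x_\pm(s),\theta_\pm(s))$ and extend it as the maximal solution of \eqref{eqphaseplane} in $\R$.

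Next we identify the limit points as $s\to0^+$. Prop. \ref{proporthogonalintersection} gives $\theta_\pm(0)\in\{k\pi\}$. The orientation fixes the value: $\langle N,\partial_z\rangle=\cos\theta$, and at the axis this equals $+1$ for $M_+$ and $-1$ for $M_-$. Hence, modulo $2\pi$, $\gamma_+(s)\to(0,0)$ and $\gamma_-(s)\to(0,\pi)$ as $s\to0^+$. These are two distinct boundary points of $\R$.

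Uniqueness follows from the uniqueness statement in \cite{bue3}. Any orbit of \eqref{eqphaseplane} that tends to $(0,0)$ or to $(0,\pi)$ lifts, after integrating $z'=\sin\theta$, to a rotational $\lambda$-translator. This surface meets the axis orthogonally with normal $\pm\partial_z$, so it is a vertical translate of $M_\pm$. Vertical translation does not change the projected orbit, so each of $\gamma_\pm$ is well defined as a single orbit, up to the $2\pi$-periodicity in $\theta$.

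The main obstacle is that \eqref{eqphaseplane} is singular at $x=0$, where $\cot x$ blows up, so ODE theory alone gives neither existence nor uniqueness of orbits leaving $(0,0)$ and $(0,\pi)$. I would avoid this by taking the radial graph $u$ from \cite{bue3}, which is smooth with $u'(0)=0$, and reading off $\theta$ through $\tan\theta=u'(x)$ near $x=0$. For $\lambda=1$ we can instead check directly that $\gamma_-$ is the horizontal line $\theta=\pi$, which gives a consistency check with the trivial solution recorded earlier. The extension of $\gamma_\pm$ to all $s>0$ then comes from item 4 of the list above, namely $\theta\in(0,\pi)$ for $\gamma_+$ together with boundedness, which excludes blow-up.
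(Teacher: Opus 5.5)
Your approach is the paper's. The paper gives no separate argument for this proposition: it simply records, in the phase plane, the radial solutions from \cite{bue3} recalled before Def.~\ref{defM+-}. Reading off the endpoints via Prop.~\ref{proporthogonalintersection} and $\langle N,\partial_z\rangle=\cos\theta$ is the intended content.

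There is, however, a concrete error in your parametrization of $\gamma_-$. In the paper's conventions the normal is tied to the direction of the arc-length parameter, since $x'=\cos\theta$ and $\langle N,\partial_z\rangle=\cos\theta$. A downward normal at the axis forces $\theta=\pi$ there, hence $x'=-1$. So at $s=0$ the profile curve is moving \emph{towards} the axis, and the off-axis part of $M_-$ corresponds to $s<0$, not $s>0$.

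Your claim that $\gamma_-(s)\to(0,\pi)$ as $s\to0^+$ with $x_-(s)>0$ on $(0,\epsilon)$ is therefore impossible. If $x(s)\to0$ as $s\to0^+$ while $x>0$ on $(0,\epsilon)$, then $x'=\cos\theta>0$ at points arbitrarily close to $0$, which contradicts $\theta\to\pi$. Your own $\lambda=1$ check shows the same thing: along $\theta\equiv\pi$ one has $x'=-1$, so $x$ decreases to $0$ as $s$ increases.

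The correct statement is $\gamma_-(s)\to(0,\pi)$ as $s\to0^-$. The point $(0,\pi)$ is the forward endpoint of $\gamma_-$, not a point from which it emanates, and $\gamma_-$ must be continued backwards in time. If you instead parametrize $M_-$ with $s>0$ moving away from the axis, you get $\theta(0)=0$ and normal $+\partial_z$. That is $M_-$ with the opposite orientation, which is a $(-\lambda)$-translator, not the $\lambda$-translator of Def.~\ref{defM+-}.

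The fix is simply to take $s\in(-\epsilon,0]$ for $M_-$. This is how the paper uses $\gamma_-$ afterwards: ``for $s<0$ small enough $\gamma_-(s)$ lies in the region $x>\Gamma(\theta)$ and $\theta<\pi$'', and later $\gamma_-(s)\to e_1$ as $s\to-\infty$. The time direction matters for every subsequent phase-plane argument, so it cannot be left ambiguous.

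Two smaller points. First, your closing sentence extends $\gamma_\pm$ to all $s>0$ using item 4, but item 4 concerns $\gamma_+$ only. For $\gamma_-$ the maximal continuation is in the direction $s<0$ and is not covered by that item; the statement only needs existence of the orbits anyway. Second, your uniqueness paragraph is not required for the proposition, and as written it skips a step. An orbit of \eqref{eqphaseplane} tending to $(0,0)$ or $(0,\pi)$ only produces a surface that is smooth away from the axis. You would first need a removable-singularity or regularity argument showing that the lifted surface is a $C^2$ radial graph across the axis before invoking the uniqueness result of \cite{bue3}.
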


We finish this section with a technical result that will be useful in the sequel.
\begin{proposition}\label{propendpointsorbita}
Let $\gamma$ be an orbit in $\theta\in(-\pi,0)$ whose endpoints are $(x_1,0)$ and $(x_2,-\pi)$. Then, $x_1<x_2$.
\end{proposition}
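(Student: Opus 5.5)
The plan is to find a monotone quantity along orbits of \eqref{eqphaseplane} whose values at the two endpoints can be compared directly. Consider the function
$$
G(x,\theta)=\sin x\,\sin\theta+2\lambda\cos x .
$$
Along any solution of \eqref{eqphaseplane} I will differentiate and substitute $\theta'=2(\cos\theta+\lambda)-\sin\theta\cot x$:
$$
\frac{d}{ds}\big(\sin x\sin\theta\big)=\cos x\cos\theta\sin\theta+\sin x\cos\theta\,\theta'=2\sin x\cos^2\theta+2\lambda\sin x\cos\theta .
$$
The $\cot x$ term cancels exactly against $\cos x\cos\theta\sin\theta$. Since $\frac{d}{ds}(2\lambda\cos x)=-2\lambda\sin x\cos\theta$, this gives
$$
\frac{d}{ds}G(x(s),\theta(s))=2\sin x\cos^2\theta\geq0 .
$$
This holds on $\R$ because $x\in(0,\pi)$. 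So $G$ is non-decreasing along every orbit, and strictly increasing unless $\cos\theta\equiv0$ on an interval. The latter only happens for the equilibria, which do not lie on an orbit joining $\theta=0$ to $\theta=-\pi$.

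Next I determine in which order the orbit visits its endpoints. At a point with $\theta=0$, \eqref{eqphaseplane} gives $\theta'=2(1+\lambda)>0$, since $\lambda>0$ after the normalization of the previous section. Hence an orbit contained in $\{-\pi<\theta<0\}$ can only reach the line $\theta=0$ from below, as $s$ increases. Therefore $(x_1,0)$ is the final endpoint of $\gamma$ and $(x_2,-\pi)$ is the initial one. This remains true regardless of the sign of $\theta'=2(\lambda-1)$ on $\theta=-\pi$.

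Then I integrate the monotonicity along $\gamma$ from $(x_2,-\pi)$ to $(x_1,0)$. At both endpoints $\sin\theta=0$, so $G$ reduces to $2\lambda\cos x$. This yields
$$
2\lambda(\cos x_1-\cos x_2)=\int_\gamma 2\sin x\cos^2\theta\,ds>0 .
$$
The integral is strictly positive because $\cos\theta$ cannot vanish identically on $\gamma$. Since $\lambda>0$, we get $\cos x_1>\cos x_2$, and because $\cos$ is strictly decreasing on $(0,\pi)$, this gives $x_1<x_2$.

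The main obstacle is finding the right first integral $G$; once it is in hand, the rest is bookkeeping. The subtle points are the orientation argument, i.e. that $\theta=0$ is reached last, and the use of $\lambda>0$. If needed, I will also check that $\gamma$ reaches its endpoints within $\R$ at finite parameter values, or else pass to the limit in the integral identity; this is harmless because $G$ is continuous up to the endpoints.
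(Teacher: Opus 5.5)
Your proof is correct, and it takes a genuinely different route from the paper's.

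\textbf{The paper's route.} It uses no monotone quantity. It takes $\gamma$ to be a graph $x=x(\theta)$ over $[-\pi,0]$, splits it at $\theta=-\pi/2$, and reflects the lower half through $\theta=-\pi/2$. It then compares slopes via \eqref{eqderivadax}. At a common point the two denominators differ by $4\cos\theta>0$, so once the reflected lower half lies to the right of the upper half it stays there. This needs a separate expansion near $\theta=-\pi/2$, where both slopes vanish, to start the comparison, and a slope check at $\theta=0$ for the case $x_1=x_2$.

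\textbf{Your route.} Your identity $\frac{d}{ds}(\sin x\sin\theta+2\lambda\cos x)=2\sin x\cos^2\theta$ is right. It is the rotational flux identity $(\sin x\sin\theta)'=2H\sin x\cos\theta$, with the $\lambda$-term absorbed into $2\lambda\cos x$. Your orientation step, using $\theta'=2(1+\lambda)>0$ on $\theta=0$, is exactly what fixes the sign.

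\textbf{What your route buys.}
\begin{itemize}
\item It does not need $\theta$ to be monotone along $\gamma$. The paper takes this for granted both in writing $\gamma$ as a graph $x(\theta)$ and in its slope comparison.
\item It avoids the second-order analysis at $\theta=-\pi/2$.
\item It gives the exact formula $2\lambda(\cos x_1-\cos x_2)=\int_\gamma 2\sin x\cos^2\theta\,ds$.
\item Because $G$ is $2\pi$-periodic in $\theta$, the same computation yields at once the anti-symmetric statement in $(0,\pi)$ and the strict monotonicity of the sequence $x_n$ in the proof of Theorem \ref{t1}.
\item It also excludes periodic orbits on the cylinder $\theta\in\r/2\pi\mathbb{Z}$, including those winding around in $\theta$. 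The Bendixson--Dulac argument on the planar strip does not cover these.
\end{itemize}

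\textbf{What the paper's route buys.} It gives a pointwise comparison of the two halves of $\gamma$, namely $x(\theta)<x(-\pi-\theta)$ for all $\theta\in(-\pi/2,0]$, rather than only at the endpoints. It also requires no first integral to be found.
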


\begin{proof}
We now that $\gamma$ can be expressed as a horizontal graph $x=x(\theta)$ with $x:[-\pi,0]\to\r$. Furthermore, the derivative $x'(\theta)$ is
\begin{equation}\label{eqderivadax}
x'(\theta)=\dfrac{\cos\theta}{2(\cos\theta+\lambda)-\sin\theta\cot x}.
\end{equation}
We decompose the function $x(\theta)$ as
$$
x_1:[-\pi/2,0]\to\r,\qquad x_2:[-\pi,-\pi/2]\to\r,
$$
which satisfy $x_1(-\pi/2)=x_2(-\pi/2)=x_0$, $x_1(0)=x_1$ and $x_2(-\pi)=x_2$. Furthermore, define
$$
\hat{x_2}:[-\pi/2,0]\to\r,\qquad \hat{x_2}(\theta)=x_2(-\theta-\pi).
$$
Then, the graph of $\hat{x_2}$ agrees with the one of $x_2$ up to a reflection about $\theta=-\pi/2$. Given $\epsilon>0$ we define $\theta_\epsilon=-\pi/2+\epsilon$. We have
\begin{align*}
x_1'(\theta_\epsilon)&=\dfrac{\cos(-\pi/2+\epsilon)}{2(\cos(-\pi/2+\epsilon)+\lambda)-\sin(-\pi/2+\epsilon)\cot x_1(\theta_\epsilon)}\\
&=\dfrac{\sin\epsilon}{2(\sin\epsilon+\lambda)+\cos\epsilon\cot x_1(\theta_\epsilon)},
\end{align*}
\begin{align*}
\hat{x_2}'(\theta_\epsilon)=-x_2'(-\pi/2-\epsilon)&=\dfrac{-\cos(-\pi/2-\epsilon)}{2(\cos(-\pi/2-\epsilon)+\lambda)-\sin(-\pi/2-\epsilon)\cot\hat{x_2}(-\pi/2-\epsilon)}\\
&=\dfrac{\sin\epsilon}{2(-\sin\epsilon+\lambda)+\cos\epsilon\cot\hat{x_2}(\theta_\epsilon)}.
\end{align*}
For $\epsilon$ small enough one has that $x_1(\theta_\epsilon)$ and $\hat{x_2}(\theta_\epsilon)$ are almost equal and thus $x_1'(\theta_\epsilon)<\hat{x_2}'(\theta_\epsilon)$, meaning that $\hat{x_2}$ lies locally above $x_1$ near $\theta_\epsilon$. 

Now we stand in position to prove the result.
Arguing by contradiction, if $x_1>x_2$ it means that the graph of $\hat{x_2}$ crosses the graph of $x_1$ transversely. Thus, there exists $\theta_*\in(-\pi/2,0)$ such that $x_1(\theta_*)=\hat{x_2}(\theta_*)=x_*$ and $x_1$ and $\hat{x_2}$ are transverse at $(x_*,\theta_*)$. We can assume without losing generality that $\theta_*$ is the largest value in $(-\pi/2,0)$ for which $x_1$ and $\hat{x_2}$ are transverse. Hence, there exists $\epsilon>0$ such that 
$$
x_1(\theta)<\hat{x_2}(\theta),\ \forall\theta\in(\theta_*-\epsilon,\theta_*),\quad x_1(\theta)>\hat{x_2}(\theta),\ \forall\theta\in(\theta_*,\theta_*+\epsilon).
$$
In particular, $x_1'(\theta_*)\geq\hat{x_2}'(\theta_*)$. However, this is a contradiction since
$$
x_1'(\theta_*)=\dfrac{\cos\theta_*}{2(\cos\theta+\lambda)-\sin\theta\cot x_*}<\dfrac{\cos\theta_*}{2(-\cos\theta+\lambda)-\sin\theta\cot x_*}=\hat{x_2}'(\theta_*).
$$
In the case that $x_1=x_2$ and $x_1$ and $\hat{x_2}$ never intersect until reaching the line $\theta=0$, another comparison between the derivatives yields $x_1'(0)\geq\hat{x_2}'(0)$. However, this is also a contradiction since
$$
x_1'(0)=\frac{1}{2(1+\lambda)},\qquad \hat{x_2}'(0)=-x_2'(-\pi)=\frac{1}{2(-1+\lambda)}.
$$
\end{proof}
By the anti-symmetry of the phase plane with respect to the axes $\{x=\pi/2\}\times\{\theta=0\}$, the opposite holds in the region $\theta\in(0,\pi)$, that is an orbit with endpoints $(x_1,\pi)$ and $(x_2,0)$ satisfy $x_1<x_2$.

\subsection{Proof of Thm. \ref{t1}}

\begin{proof}

We distinguish cases on the values of $\lambda$.

\textbf{Case $\lambda>1$}. In such a case, $\Gamma$ consists on two compact arcs, anti-symmetric with respect to the vertical line $x=\pi/2$. The first joins the points $(0,0)$ and $(0,\pi)$ and has a global maximum at $\theta=\arccos\frac{-1}{\lambda}=\pi-\arccos\frac{1}{\lambda}$, while the other joins $(\pi,0)$ and $(\pi,-\pi)$ and has the local maximum at $\theta=-\pi+\arccos\frac{1}{\lambda}$. See Fig. \ref{figphaseplanelambda>1}.

\begin{figure}[h]
\begin{center}
\includegraphics[width=.4\textwidth]{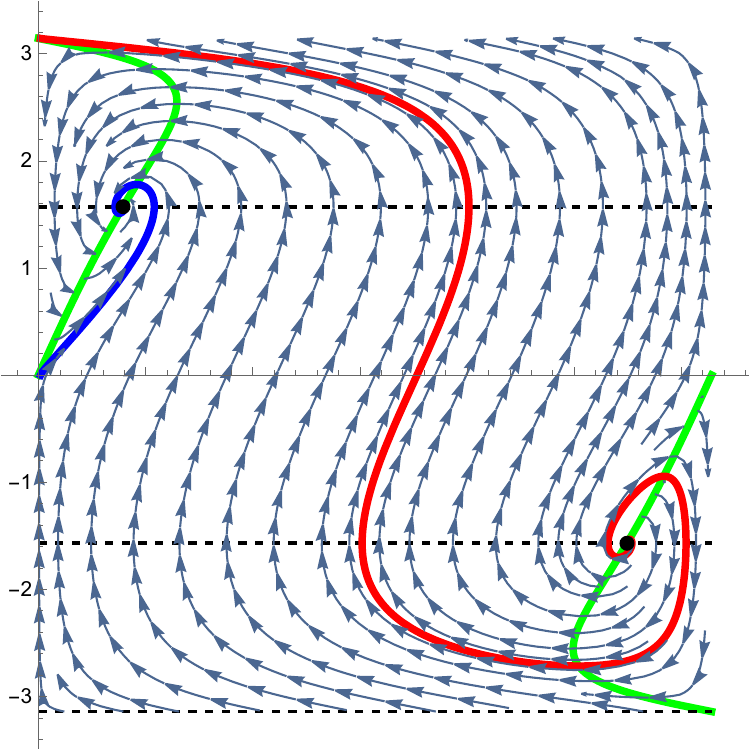}
\end{center}
\caption{The phase plane for $\lambda>1$. The curve $\Gamma$ has been plotted in green; the orbit $\gamma_+$ has been plotted in blue; and the orbit $\gamma_-$ appears in red. Here, $\lambda=1'2$}
\label{figphaseplanelambda>1}
\end{figure}

We begin by describing the orbits corresponding to the surfaces $M_\pm$. From Prop. \ref{propexistenceradial} we know the existence of an orbit $\gamma_+(s)=(x(s),\theta(s))$ such that $\gamma_+(0)=(0,0)$ and $\gamma_+(s)$ lying in the region $x>\Gamma(\theta)$ and $\theta>0$ for $s>0$ small enough. Similarly, there exists $\gamma_-$ such that $\gamma_-(0)=(0,\pi)$ and for $s<0$ small enough $\gamma_-(s)$ lies in the region $x>\Gamma(\theta)$ and $\theta<\pi$. See Fig. \ref{figphaseplanelambda>1}.

Let us focus on $\gamma_+$. Since the $x$-coordinate is bounded by $1/\lambda$, by monotonicity and properness of $\gamma_+$ there exists $s_+>0$ such that $\gamma(s_+)=(x(s_+),\pi/2)$. Similarly for $\gamma_-$, there exists $s_-<0$ such that $\gamma_-(s_-)=(x(s_-),\pi/2)$. We claim that $x_+=x(s_+)<x_-=x(s_-)$. Arguing by contradiction, assume that $x_+\geq x_-$. If $x_+=x_-$ then $\gamma_+$ and $\gamma_-$ can be smoothly glued together to define a larger orbit $\gamma_0$ that joins the points $(0,0)$ and $(0,\pi)$. It is clear that the rotational $\lambda$-translator $M_0$ defined by $\gamma_0$ is embedded and has the topology of a sphere. Furtheremore, since the $x$-coordinate is bounded in particular $M_0$ is the boundary of a bounded domain $W_0$ of $\s^2\times\r$. However, this is a contradiction with Thm. \ref{thnonexistenceclosed2}. It remains to prove that $x_+>x_1$ cannot happen. Arguing by contradiction, and since $\gamma_+$ and $\gamma_-$ cannot intersect each other, $\gamma_-$ has to intersect the curve $\Gamma$. Also, $\gamma_-$ cannot converge to a point $(0,\theta_0)$ with $\theta_0\in(0,\pi/2)$ by Prop. \ref{proporthogonalintersection} and thus $\gamma_-$ has to intersect again the line $\theta=\pi/2$. Finally, $\gamma_-$ has to intersect again the curve $\Gamma$ and since it cannot self-intersect, it must reach again the line $\theta=\pi/2$ at some $\hat{x_-}<x_-$. This process should be repeated and since there are not closed orbits in $\R$ thanks to \ref{propnonexistenceclosedorbits}, $\gamma_-$ should eventually converge to the equilibrium $e_0$ as $s\to-\infty$. This is a contradiction with the \emph{sink} structure of $e_0$, as it has a stable spiral structure.

Thus, $x_+<x_-$ and because $\gamma_+$ cannot intersect $\gamma_-$, we conclude that $\gamma_+$ must intersect the curve $\Gamma$. An argument similar as above yields that $\gamma_+$ must spiral around $e_0$. Since there do not exist closed orbits by Prop. \ref{propnonexistenceclosedorbits}, then $\gamma_+$ eventually converges to $e_0$ as $s\to\infty$; see Fig. \ref{figphaseplanelambda>1}, the orbit in blue. The corresponding surface $M_+$ is properly embedded, since the $z$-coordinate is monotonous, has the topology of a disk and its end converges to the CMC cylinder generated by $e_0$. See Fig. \ref{figperfiles}, left, the profile curve in blue.

\begin{figure}[h]
\begin{center}
\includegraphics[width=.8\textwidth]{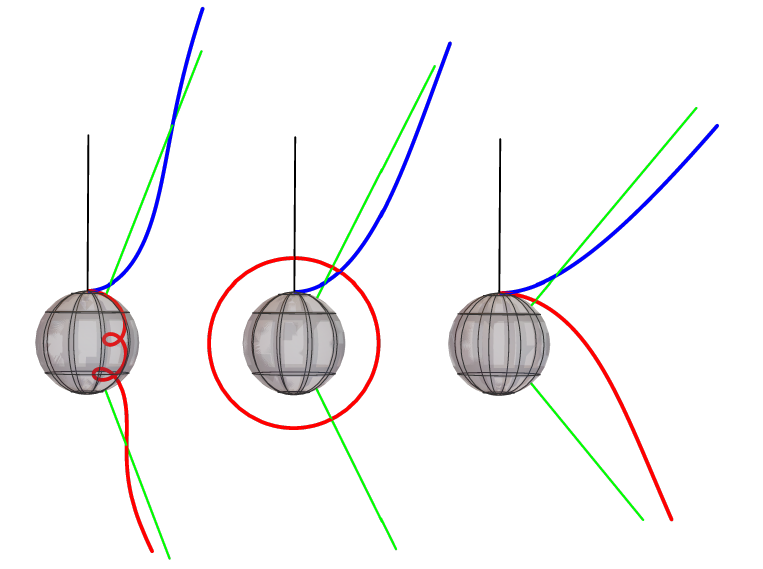}
\caption{The profile curve of a rotational $\lambda$-translator in a projected model of $\s^2\times\r$. Left, $\lambda>1$; center, $\lambda=1$; and right, $\lambda<1$.}
\label{figperfiles}
\end{center}
\end{figure}

Now we focus on $\gamma_-$. By monotonicity, $\gamma_-$ must intersect the line $\theta=0$. Now, the behavior of $\gamma_-$ changes depending on the value of $\lambda$. Indeed, following the motion in $\R$, it may intersect the curve $\Gamma$ in the region $\theta\in(-\pi,0)$, or may reach the line $\theta=-\pi$ at some $(x_1,-\pi)$. In the first case, $\gamma_-(s)\to e_1$ as $s\to-\infty$ and thus $\gamma_-$ \emph{emerges} from $e_1$; see Fig. \ref{figphaseplanelambda>1}, the orbit in red. Thus, we focus on the second case.

Let us consider the orbit $\sigma_1$ that passes through the point $(x_1,\pi)$. By uniqueness, $\sigma_1$ cannot intersect $\gamma_-$ in $\R$ and thus either $\sigma_1$ converges to $e_1$ as $s\to-\infty$ (see Fig. \ref{figcase>1}, left, the orbit in orange), or $\sigma_1$ reaches the line $\theta=-\pi$ at some $(x_2,-\pi)$ with $x_2>x_1$ (see Fig. \ref{figcase>1}, right, the orbit in orange). Now, we take advantage of the $2\pi$-periodicity of $\R$ in the $\theta$-direction and consider the translation $\sigma_1-(0,2\pi)$. By uniqueness and $2\pi$-periodicity, $\sigma_1-(0,2\pi)$ and $\gamma_-$ can be smoothly glued together at $(x_1,-\pi)$ to form a larger orbit, that obviously agrees with $\gamma_-$ and hence will be denoted the same. 

\begin{figure}[h]
\begin{center}
\includegraphics[width=.4\textwidth]{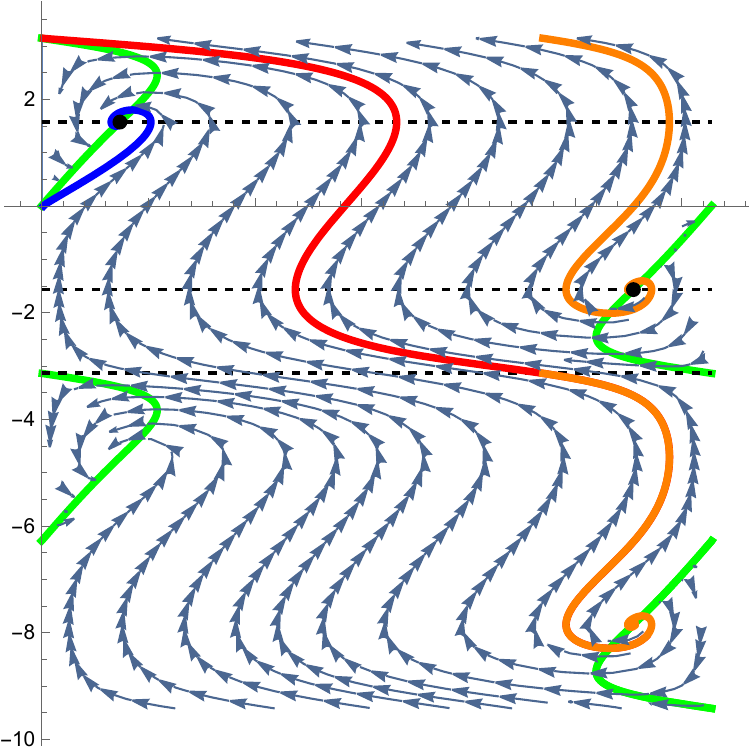}
\includegraphics[width=.4\textwidth]{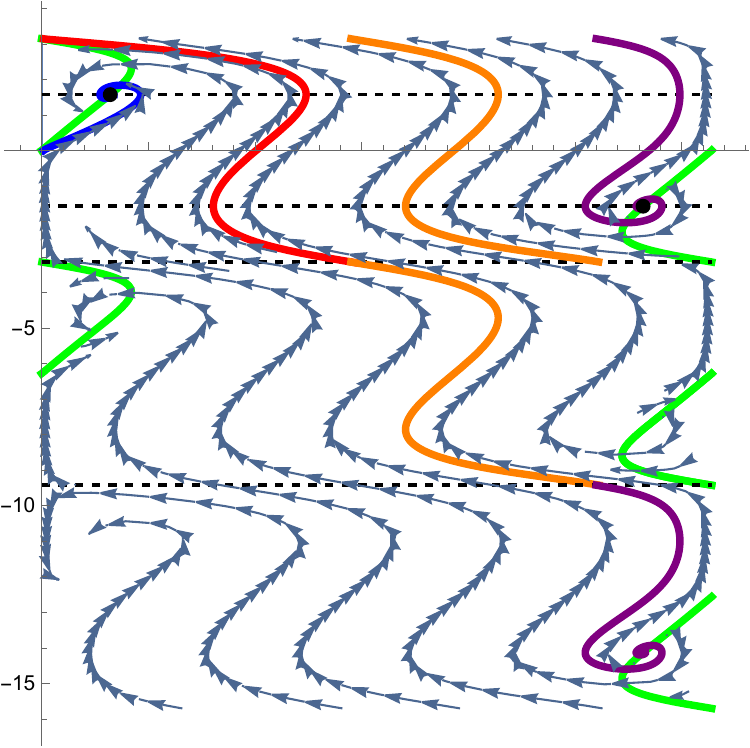}
\end{center}
\caption{The phase plane for $\lambda>1$ and the possible behaviors for the orbit $\gamma_-$. Left: $\lambda=1'3$, $\gamma_-$ reaches the line $\theta=-\pi$ and emerges from $e_1-(0,2\pi)$. Right, $\lambda=1'5$, $\gamma_-$ reaches the line $\theta=-3\pi$ and emerges from $e_1-(0,4\pi)$.}
\label{figcase>1}
\end{figure}

If we are in the first case, then $\gamma_-\to e_1$ as $s\to-\infty$. If we are in the second case, we consider now the orbit $\sigma_2$ passing through the point $(x_2,\pi)$ and again it either converges to $e_1$ or intersects the line $\theta=-\pi$. In any case, we glue again $\gamma_-$ and $\sigma_2-(0,4\pi)$ at $(x_2,-3\pi)$ to form a larger orbit, denoted again by $\gamma_-$. Note that this proceeding generates an strictly sequence $\{x_n\}$, consisting on the points of intersection of $\gamma_-$ with the lines $\theta=(1 -2n)\pi$. We claim that this process ends after a finite number of steps.

Arguing by contradiction, assume that $x_n\to x_\infty$ and if we name $\sigma_\infty$ the orbit passing through $(x_\infty,\pi)$, then $\sigma_\infty$ intersects the line $\theta=-\pi$, and in particular does not converge to $e_1$, and $\sigma_n\to\sigma_\infty$. Let assume that the endpoints of $\sigma_\infty$ are $(x_\infty^1,\pi)$ and $(x_\infty^2,0)$. Then, by Prop. \ref{propendpointsorbita} $x_\infty^1<x_\infty^2$. In particular, for $n$ big enough the endpoints of $\sigma_n$ also satisfy this property and hence $x_{n_0}>x_\infty$ for some $n_0$ big enough, a contradiction.

Consequently, $\gamma_-$ is glued with a final orbit $\sigma_{n_0},\ n_0\in\mathbb{N}$ and ends up converging to $e_1$; see Fig. \ref{figcase>1}, right, the orbit in purple. The corresponding surface $M_-$ has the topology of a disk, is properly immersed and the height function $z$ increases and decreases until its end converges to the CMC cylinder generated by $e_1$. Furthermore, the number of \emph{loops} that $M_-$ makes agrees with the number $n_0$ of orbits $\sigma_k$ needed for reaching the equilibrium $e_1$. See Fig. \ref{figperfiles}, left, the profile curve in red.

\textbf{Case $\lambda=1$}. Now, assume that $\lambda=1$. In this case, the curve $\Gamma$ also consists on two compact arcs, but this time $\Gamma(\pi)=\pi/2$ and similarly $\Gamma(-\pi)=\pi/2$. Hence, $\Gamma$ for $\theta\in(0,\pi)$ is a compact arc that joins the point $(0,0)$ and $(\pi/2,\pi)$, while in the region $\theta\in(-\pi,0)$ the curve $\Gamma$ is a compact arc that joins $(\pi,0)$ and $(\pi/2,-\pi)$. See Fig. \ref{figcase=1}, left.

\begin{figure}[h]
\begin{center}
\includegraphics[width=.4\textwidth]{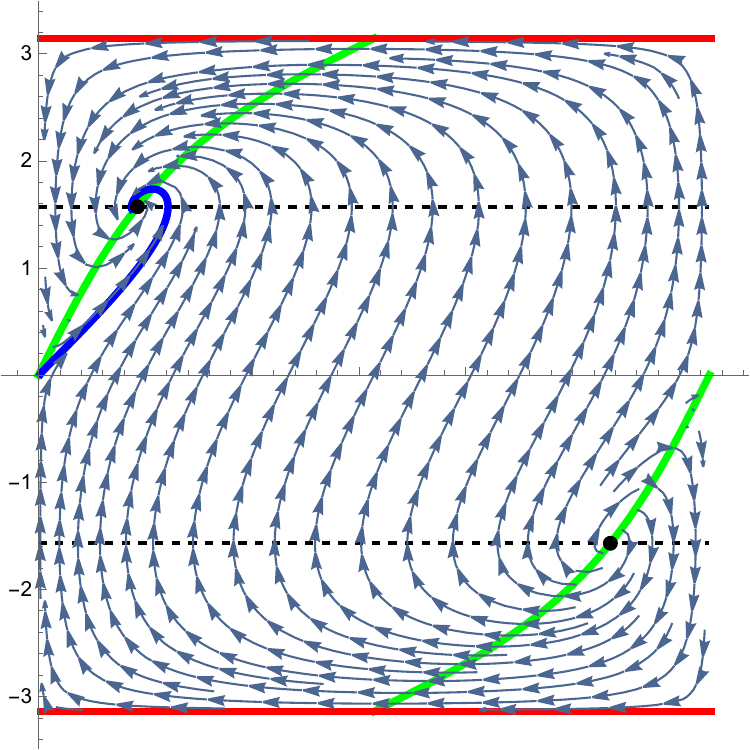}
\includegraphics[width=.4\textwidth]{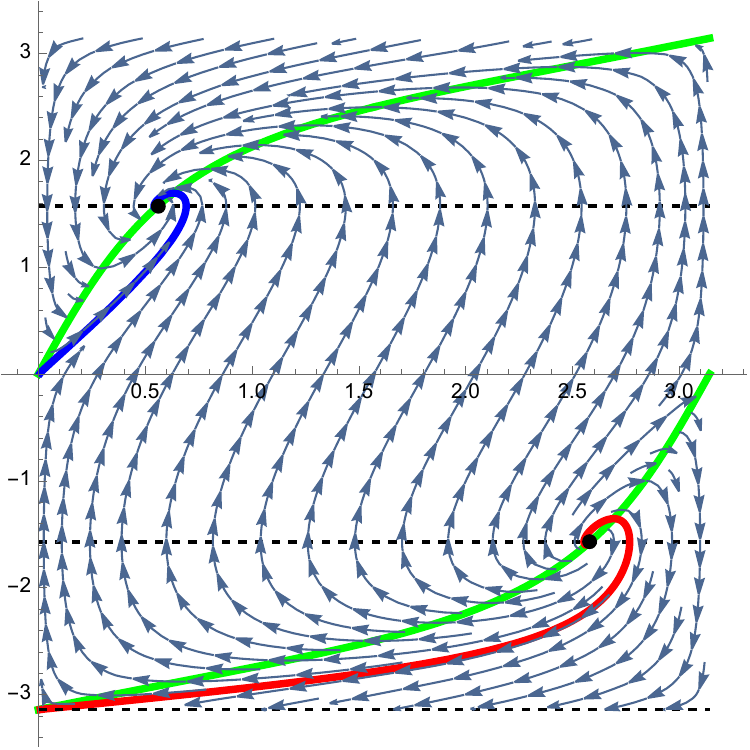}
\end{center}
\caption{Left: the phase plane for $\lambda=1$. Right: the phase plane for $\lambda<1$. There appear in both the curve $\Gamma$ in green, the orbit $\gamma_+$ in blue and $\gamma_-$ in red.}
\label{figcase=1}
\end{figure}

We begin with the $\lambda$-translator $M_-$, which is clear that agrees with a horizontal slice $\s^2\times\{z_0\},\ z_0\in\r$ with downwards orientation. Thus, $M_-$ determines the orbit $\gamma_-^1(s)=(s,\pi)$, which is the same as the orbit $\gamma_-^2(s)=(s,-\pi)$ by $2\pi$-periodicity. See Fig. \ref{figcase=1}, left, the orbits in red. In particular, by uniqueness of the Cauchy problem, the orbits $\gamma_-^1$ and $\gamma_-^2$ act as barriers and hence orbit in $\R$ must stay in the region $\theta\in(0,2\pi)$. Regarding the orbit $\gamma_+$, it has the point $(0,0)$ as endpoint and since it cannot intersect $\gamma_-^1$ neither spiral around $e_0$ converging to some closed limit cycle by Prop. \ref{propnonexistenceclosedorbits}, it must converge to $e_0$ as $s\to\infty$. See Fig. \ref{figcase=1}, left, the orbit in blue. The corresponding $\lambda$-translator $M_+$ has the same properties as the one for $\lambda>1$. In Fig. \ref{figperfiles} we see the profile curves for the orbit $\gamma_+$ in blue and for $\gamma_-$ in red.

\textbf{Case $\lambda<1$}. We finish with the case $\lambda<1$. Now we have to take into account that the function $\Gamma(\theta)$ changes of sign, precisely for $\theta_0=\arccos(-\lambda)$. For instance, we have two branches of the curve $\Gamma$ in the region $\theta\in(0,\pi)$; one for $\theta\in(0,\theta_0)$ and the other for $\theta\in(\theta_0,\pi)$, and the latter is given by the graph $x=\pi+\Gamma(\theta)$. The curve $\Gamma$ in the region $\theta\in(-\pi,0)$ is obtained similarly, or by the anti-symmetry properties of $\R$. See Fig. \ref{figcase=1}, right.

The properties of the orbit $\gamma_+$ having the point $(0,0)$, as well as the $\lambda$-translator $M_+$ are as in the previous cases, hence we skip the details. Now, let us focus on the orbit $\gamma_-$ corresponding to the $\lambda$-translator $M_-$. It has the point $(0,\pi)$ as endpoint, say at the instant $s=0$, but for $s<0$ the angle $\theta(s)$ increases. Geometrically, $M_-$ reaches the rotation axis with decreasing height function. Thus, we take advantage of the $2\pi$-periodicity of $\R$ and consider $\gamma_-$ as the orbit having $(0,-\pi)$ as endpoint at $s=0$. By monotonicity, it is clear that $\gamma_-$ cannot reach the antipodal fiber $\{(0,0,-1)\}\times\r$ and thus the only possibility for $\gamma_-$ is to converge to $e_1$ as $s\to-\infty$. Therefore, $M_-$ is a proper disk with strictly monotonous height function, hence embedded, and whose end converges to the CMC cylinder generated by $e_1$. See Fig. \ref{figperfiles}, right, the profile curve in red.

This concludes the classification of the rotational $\lambda$-translators intersecting the rotation axis.
\end{proof}


\subsection{Proof of Thm. \ref{t2}}

Now we classify the rotational $\lambda$-translators that do not intersect the rotation axis. This will conclude the classification of all rotational $\lambda$-translators in $\s^2\times\r$.

\begin{proof}

The proof follows almost immediately from the properties of the phase plane deduced in the proof of Thm. \ref{t1}, hence most of the details are skipped. We begin by the simpler case and treat the cases $\lambda<1$ and $\lambda=1$ at once. Thus, assume $\lambda\leq1$, fix $x_0\in(0,\pi)$ and let $\gamma$ the orbit passing through $(x_0,0)$ at $s=0$. By monotonicity and since in the case $\lambda=1$ the orbit $\gamma$ cannot intersect the orbit $\gamma_-(s)=(s,\pi)$, we deduce that $\gamma$ necessarily intersects the curve $\Gamma$ (this is clear if $\lambda<1$). Since $\gamma$ cannot converge to the axis $x=0$, neither stay at positive distance to $e_0$ we already know from previous discussions that $\gamma$ converges to $e_0$ as $s\to\infty$. Now, for $s<0$ the orbit $\gamma$ intersects the line $\theta=-\pi/2$. Furthermore, if $\lambda=1$ then $\gamma$ cannot intersect $\gamma_-(s)=(s,-\pi)$, while if $\lambda<1$ then $\gamma$ must intersect the curve $\Gamma$ and then cannot intersect $\gamma_-$. In any of both cases, $\gamma$ converges to $e_1$ as $s\to-\infty$. See Fig. \ref{figfasesnointerseca}, the orbit in red, center for $\lambda=1$ and right for $\lambda<1$. The corresponding $\lambda$-translator has the topology of an annulus, one end converges to the cylinder $e_0$ and the other converges to $e_1$. In between, the height function reaches a global minimum and a single loop occurs, hence the $\lambda$-translator is not embedded. See Fig. \ref{figperfilnointerseca}, center for $\lambda=1$ and right for $\lambda<1$.

\begin{figure}[h]
\hspace{-1cm}\begin{tikzpicture}[scale=1]
\node[anchor=south east,inner sep=0] at (-3,0){\includegraphics[width=.4\textwidth]{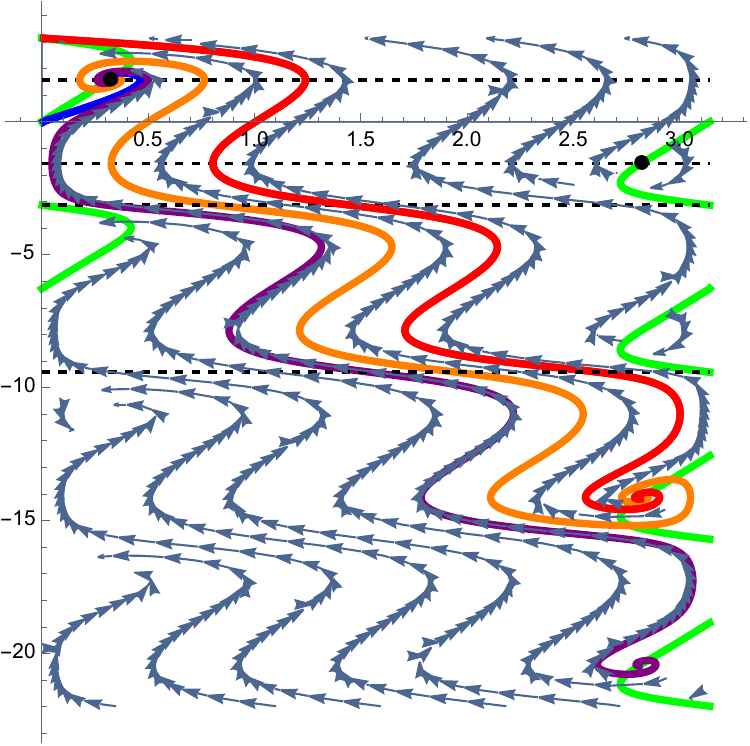}};
\node[anchor=south,inner sep=0] at (0,0){\includegraphics[width=.4\textwidth]{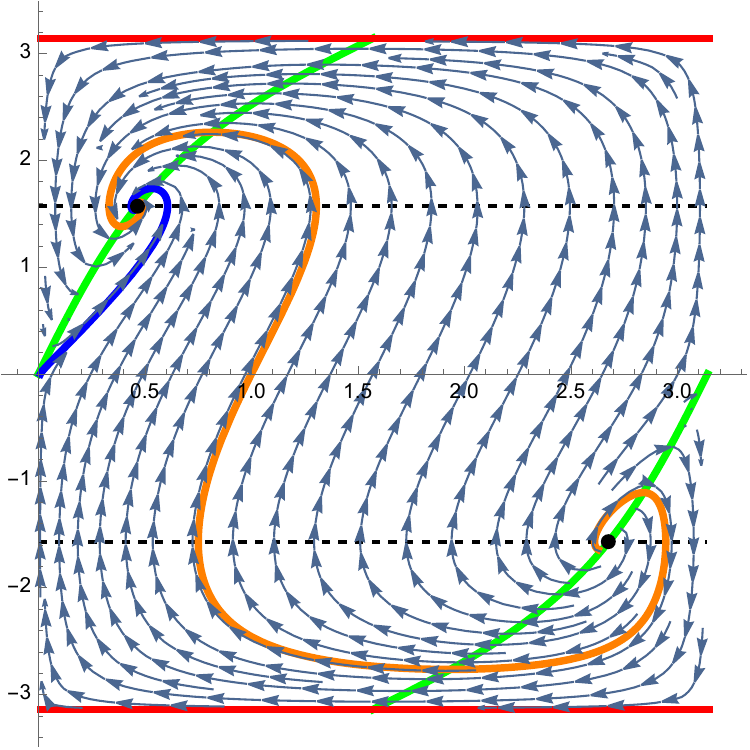}};
\node[anchor=south west,inner sep=0] at (3,0){\includegraphics[width=.4\textwidth]{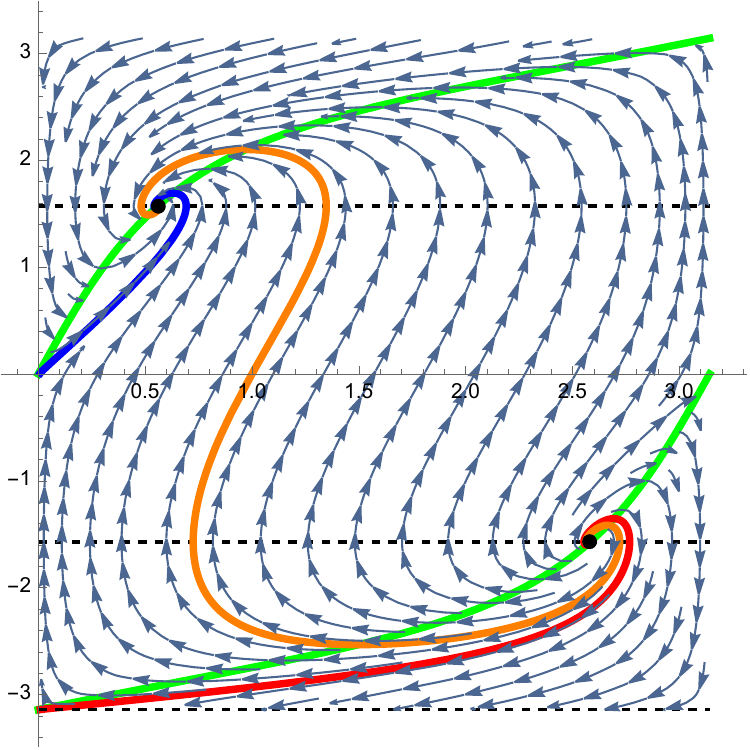}};
\end{tikzpicture}
\caption{The phase planes for $\lambda>1$ (left), $\lambda=1$ (center) and $\lambda<1$ (right), and an orbit in orange corresponding to a rotational $\lambda$-translator not-intersecting the rotation axis.}
\label{figfasesnointerseca}
\end{figure}

Finally, let $\lambda>1$ and fix $x_0\in(0,\pi)$. Denote by $(x_-,0),\ x_-\in(0,\pi)$, the point of intersection of $\gamma_-$ with $\theta=0$ and let $n_0\in\mathbb{N}$ the number of loops of $M_-$. In particular, $\gamma_-(s)\to e_1-(0,2n_0\pi)$. Assume $x_0<x_-$ and let $\gamma$ be the orbit passing through $(x_0,0)$ at $s=0$. For $s>0$ the orbits $\gamma_+$ and $\gamma_-$ act as barriers and $\gamma(s)\to e_0$ as $s\to\infty$. For $s<0$ and since $\gamma_-$ acts as a barrier again, either $\gamma(s)\to e_1-(0,2n_0\pi)$ as $s\to-\infty$, or there exists $n_1>n_0$ such that $\gamma(s)\to e_1-(0,2n_1\pi)$ as $s\to-\infty$. Since the number of loops of the rotational $\lambda$-translator generated by $\gamma$ is either $n_0$ or $n_1$, we conclude that in any case is greater than the number of loops of $M_-$. See \ref{figfasesnointerseca}, left, the orbit in orange. The corresponding profile curve self-intersects a finite number of times, has one end converging to the cylinder $e_0$ and the other converging to $e_1$. See Fig. \ref{figperfilnointerseca}, left.

\begin{figure}[H]
\begin{center}
\includegraphics[width=.8\textwidth]{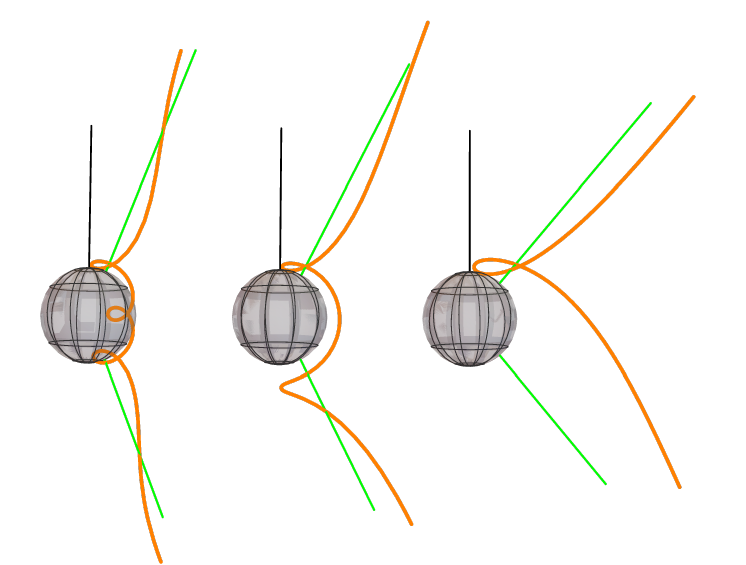}
\end{center}
\caption{The profile curve of a rotational $\lambda$-translator not-intersecting the rotation axis for $\lambda>1$ (left), $\lambda=1$ (center) and $\lambda<1$ (right).}
\label{figperfilnointerseca}
\end{figure}

\end{proof}

\subsection*{Ethics declarations}

Conflict of interest. The author have no conflict of interest to declare that are relevant to the content of this article. No data were used to support this study

\subsection*{Acknowledgment}

Partially supported by the grant PID2021-124157NB-I00, funded by MCIN/ AEI/10.13039/501100011033/ "ERDF A way of making Europe".


\noindent
Universidad Murcia. \\ 
\emph{E-mail address:} jabueno@um.es

\end{document}